\theoremstyle{plain}
\newtheorem{lem}{Lemma}
\newtheorem{thm}[lem]{Theorem}
\newtheorem{conj}[lem]{Conjecture}
\theoremstyle{definition}
\newtheorem*{rem}{Remark}
\newtheorem{ex}{Example}
\newtheorem{exe}{Exercise}
\newcommand{\R}{\mathbb{R}}
\newcommand{\Z}{\mathbb{Z}}
\newcommand{\C}{\mathbb{C}}
\newcommand{\N}{\mathbb{N}}
\newcommand{\Q}{\mathbb{Q}}
\newcommand{\F}{\mathcal{F}}
\newcommand{\cN}{\mathcal{N}}
\newcommand{\cM}{\mathcal{M}}
\newcommand{\cR}{\mathcal{R}}
\newcommand{\Qc}{\mathcal{Q}} 
\newcommand{\Pc}{\mathcal{P}}
\newcommand{\Sc}{\mathcal{S}}
\newcommand{\Id}{\mathrm{Id}}
\newcommand{\SL}{\mathrm{SL}}
\newcommand{\PSL}{\mathrm{PSL}}
\newcommand{\PGL}{\mathrm{PGL}}
\def\a{\alpha}
\def\D{\Delta}
\begin{document}

\title[Quantum real numbers]
{$q$-deformed rationals and irrationals}

\author[S. Morier-Genoud]{Sophie Morier-Genoud}
\author[V. Ovsienko]{Valentin Ovsienko}

\address{Sophie Morier-Genoud,
Laboratoire de Math\'ematiques,
Universit\'e de Reims,
U.F.R. Sciences Exactes et Naturelles,
Moulin de la Housse - BP 1039,
51687 Reims cedex 2,
France
}

\address{
Valentin Ovsienko,
Centre National de la Recherche Scientifique,
Laboratoire de Math\'ematiques,
Universit\'e de Reims,
U.F.R. Sciences Exactes et Naturelles,
Moulin de la Housse - BP 1039,
51687 Reims cedex 2,
France}

\email{sophie.morier-genoud@univ-reims.fr,
valentin.ovsienko@univ-reims.fr}

\maketitle

The concept of $q$-deformation, or ``$q$-analogue'' arises in many areas of mathematics. 
In algebra and representation theory, it is the origin of quantum groups; 
$q$-deformations are important for knot invariants, combinatorial enumeration, discrete geometry, analysis, and many other parts of mathematics.
In mathematical physics, $q$-deformations are often understood as ``quantizations''.

The recently introduced notion of a $q$-deformed real number is based on the geometric idea of invariance by a modular group action. 
The goal of this lecture is to explain what is a $q$-rational and a $q$-irrational, 
demonstrate beautiful properties of these objects, and describe their relations to many different areas.
We also tried to describe some applications of $q$-numbers.

The literature on the subject grows fast, we were not able to give a detailed account.
Some topics were left outside the scope of this lecture, such as relation to the Jones polynomial,
relation to the Burau representation of Braid groups,
as well as topological and homological aspects of the theory.
We were only able to give complete proofs for some of the theorems, 
but we tried to outline the ideas of the proofs of most of them.
We tried to explain the main ideas and perspectives of this recent theory, 
some surprises are waiting for the reader in the end.

\section{$q$-analogues: counting better!}

Before we start, we need to explain the role of $q$-deformations in combinatorics.
In one sentence, one can say the following:
{\it whenever a combinatorial objects counts something,
its $q$-analogue counts exactly the same thing, but more precisely}.
This is a very large and classical theory, our explanation will be based on some examples.

\subsection{Main example: Gaussian $q$-binomials}
The polynomial
\begin{equation}
\label{qBn}
[n]_{q}:=1+q+q^{2}+\cdots+q^{n-1}=\textstyle\frac{1-q^n}{1-q},
\end{equation}
where $q$ is a parameter,
 is commonly considered as a ``quantum'', or a $q$-analogue of a (positive) integer~$n$. 
 The expression goes back to Euler ($\approx$1760) who used it in the context of combinatorics and ``$q$-series''.
 Lecture~3 of this book explains the remarkable Euler function $\varphi(q)$ where the
 polynomials~$[n]_{q}$ appear naturally.
 
 Consequently, Gauss ($\approx$1808) introduced polynomials based on $[n]_{q}$, called the
 $q$-binomials.
 Their formula is similar to the formula for the usual binomial coefficients
\begin{equation}
\label{qBin}
{n\choose m}_q=
\frac{[n]_q!}{[n-m]_q!\,[m]_q!},
\end{equation}
where $[n]_q!$ is the $q$-factorial:
\begin{equation}
\label{qfac}
[n]_q!=[1]_q[2]_q\cdots[n]_q.
\end{equation}
The latter polynomial is also very interesting and is related to many topics of algebra and combinatorics.
Some of them were discussed in Lecture~3 of this book. 

Gauss himself introduced the polynomials~\eqref{qBin} to solve some problems
of number theory related to so-called ``quadratic Gauss sums''.
Gaussian $q$-binomial coefficients have been used in many dirrefent branches of mathematics
illustrating the general principle.
{\it There are more theorems than formulas! 
The same formula describes many different phenomenons.}

The fact that the $q$-binomials~\eqref{qBin} are, indeed, polynomials in~$q$, and not rational functions,
is not completely obvious. 
This can be proved by induction using the following $q$-analogue of the Pascal identity.
\begin{exe}
\label{PascalEx}
Prove that
\begin{equation}
\label{WPI}
{n\choose m}_q={n-1\choose m-1}_q+q^m{n-1\choose m}_q.
\end{equation}
Calculate the first interesting example ${4\choose 2}_q=1+q+2q^2+q^3+q^4$.
\end{exe}

The $q$-binomial coefficients are the vertices of the ``weighted Pascal triangle'' which encodes the above formula.
$$
\xymatrix @!0 @R=0.6cm @C=0.8cm
{
&&&&{0\choose 0}_q\ar@{-}[rdd]^{\textcolor{red}{1}}\ar@{-}[ldd]_{\textcolor{red}{1}}
\\
\\
&&&{1\choose 0}_q\ar@{-}[rdd]^{\textcolor{red}{1}}\ar@{-}[ldd]_{\textcolor{red}{1}}
&&{1\choose 1}_q\ar@{-}[rdd]^{\textcolor{red}{1}}\ar@{-}[ldd]_{\textcolor{red}{q}}
\\
\\
&&{2\choose 0}_q\ar@{-}[rdd]^{\textcolor{red}{1}}\ar@{-}[ldd]_{\textcolor{red}{1}}
&&{2\choose 1}_q\ar@{-}[rdd]^{\textcolor{red}{1}}\ar@{-}[ldd]_{\textcolor{red}{q}}&&{2\choose 2}_q\ar@{-}[rdd]^{\textcolor{red}{1}}\ar@{-}[ldd]_{\textcolor{red}{q^2}}
\\
\\
&{3\choose 0}_q\ar@{-}[rdd]^{\textcolor{red}{1}}\ar@{-}[ldd]_{\textcolor{red}{1}}
&&{3\choose 1}_q\ar@{-}[rdd]^{\textcolor{red}{1}}\ar@{-}[ldd]_{\textcolor{red}{q}}
&&{3\choose 2}_q\ar@{-}[rdd]^{\textcolor{red}{1}}\ar@{-}[ldd]_{\textcolor{red}{q^2}}
&&{3\choose 3}_q\ar@{-}[rdd]^{\textcolor{red}{1}}\ar@{-}[ldd]_{\textcolor{red}{q^3}}
\\
\\
{4\choose 0}_q
&&{4\choose 1}_q&&{4\choose 2}_q&&{4\choose 3}_q&&{4\choose 4}_q
\\ 
&&&\cdots&&\cdots
}
$$

Some properties of the polynomials~\eqref{qBin}  are not very difficult to prove.
They have positive integer coefficients and they are {\it monic}, i.e.
the leading coefficient is equal to~$1$.
These polynomials are {\it palindromic} (or ``self-reciprocal''), that is their
coefficients form a symmetric sequence.
A more difficult and important property of the polynomials~\eqref{qBin} is described by the following classical theorem
 of Sylvester (1878, \cite{Syl}), who solved
Cayley's conjecture formulated 25 years earlier.

\begin{thm}
\label{SylThm}
The polynomials ${n\choose m}_q$ are unimodal.
\end{thm}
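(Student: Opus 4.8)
The plan is to pass from the polynomial to a combinatorial model and then to uncover a hidden $\mathfrak{sl}_2$-symmetry. Recall first that $\binom{n}{m}_q$ is the generating function
$$
\binom{n}{m}_q=\sum_{\lambda}q^{|\lambda|},
$$
the sum running over all partitions $\lambda=(\lambda_1\ge\cdots\ge\lambda_m\ge0)$ that fit inside the $m\times(n-m)$ rectangle, i.e.\ with $\lambda_1\le n-m$, where $|\lambda|=\lambda_1+\cdots+\lambda_m$; this follows from the recurrence \eqref{WPI} by induction, splitting partitions according to whether they contain a full column of height $m$. Writing $\binom{n}{m}_q=\sum_{k=0}^{N}a_kq^k$ with $N=m(n-m)$, the coefficient $a_k$ is thus the number of such partitions of size $k$. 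Since the polynomial is already known to be palindromic, $a_k=a_{N-k}$, unimodality is equivalent to the single chain of inequalities $a_0\le a_1\le\cdots\le a_{\lfloor N/2\rfloor}$.

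First I would recast each inequality $a_k\le a_{k+1}$ (for $2k<N$) as the existence of an injective linear map. Let $V_k$ be the complex vector space with basis the partitions of size $k$ inside the rectangle, so that $\dim V_k=a_k$, and put $V=\bigoplus_{k=0}^N V_k$. Define a raising operator $U\colon V_k\to V_{k+1}$ sending each partition to the sum of all partitions obtained by adding a single box (staying inside the rectangle), and a lowering operator $D\colon V_k\to V_{k-1}$ removing a single box. An injection $V_k\hookrightarrow V_{k+1}$ forces $a_k\le a_{k+1}$, so it suffices to prove that $U$ is injective in the range $2k<N$.

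Next I would install an action of $\mathfrak{sl}_2$ on $V$ in which $V_k$ is the weight space of weight $2k-N$, with $U$ and $D$ (after a suitable rescaling) serving as the raising and lowering generators and the Cartan element acting as $2k-N$ on $V_k$. The representation theory of $\mathfrak{sl}_2$ then finishes the proof at once: every finite-dimensional module is a direct sum of irreducibles, on each of which the weight spaces are one-dimensional, symmetric about $0$, and the raising operator is injective on every weight space of strictly negative weight. Summing over the irreducible summands, $U\colon V_k\to V_{k+1}$ is injective precisely when the weight $2k-N$ is negative, i.e.\ when $2k<N$ --- exactly the range required. This $\mathfrak{sl}_2$-symmetry is the representation-theoretic shadow of the invariant theory of binary forms on which Sylvester's original argument rested.

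The crux, and the only genuinely hard point, is to verify that $U$ and $D$ really do generate such an $\mathfrak{sl}_2$-action; equivalently, that $U$ has the Lefschetz property, namely that $U^{\,N-2k}\colon V_k\to V_{N-k}$ is an isomorphism for $2k\le N$, which in particular makes $U$ injective below the middle. Unlike the differential-poset relation $DU-UD=\Id$, the analogous commutator identity fails for the naive unweighted operators because of boundary effects at the edges of the rectangle, so the operators must be rescaled and the commutator evaluated by a careful count of the addable and removable corners of the Young diagrams. I would either carry out this bookkeeping by hand, or --- more cleanly --- bypass it by realising $V$ as the cohomology $H^{*}(\Gr(m,n);\C)$ of the complex Grassmannian of $m$-planes in $\C^{n}$: its Poincar\'e polynomial equals $\binom{n}{m}_q$ with $q=t^{2}$, the Schubert classes furnish the partition basis, and $U$ becomes cup product with the K\"ahler class. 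The Hard Lefschetz theorem then supplies the required isomorphisms $U^{\,N-2k}\colon H^{2k}\to H^{2(N-k)}$, hence the injectivity of $U$ below the middle degree, completing the argument.
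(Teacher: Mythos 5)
Your sketch is essentially correct, but note that the paper itself offers no proof of this theorem: it cites Sylvester's original invariant-theoretic argument and Zeilberger's exposition of O'Hara's combinatorial proof, and explicitly remarks that none of the known proofs is elementary. What you outline is a third standard route, the Lefschetz/representation-theoretic one (due in this combinatorial form to Proctor, and geometrically to the Hard Lefschetz theorem on the Grassmannian). Your reductions are all sound: the partition-in-a-rectangle model for $\binom{n}{m}_q$, the reduction of unimodality to $a_k\le a_{k+1}$ for $2k<N$ via palindromicity, the translation into injectivity of a raising operator, and the observation that a finite-dimensional $\mathfrak{sl}_2$-module automatically has this injectivity on negative weight spaces. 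You are also right to flag the crux honestly: the naive commutator $[D,U]$ is not the Cartan element, and either one must exhibit Proctor's explicit rescaling of the up and down maps on the lattice $L(m,n-m)$, or one outsources the difficulty to Hard Lefschetz for $H^{*}(\Gr(m,n);\C)$, where Pieri's rule identifies cup product with the hyperplane class $\sigma_1$ with your unweighted box-adding operator $U$, so that $U^{N-2k}\colon H^{2k}\to H^{2(N-k)}$ being an isomorphism forces $U$ to be injective below the middle degree. Both completions are legitimate but neither is elementary, which is consistent with the paper's own disclaimer; as written, your argument is a correct proof modulo one of these two substantial imported theorems, and it has the advantage over O'Hara's injection of explaining \emph{why} unimodality holds (a hidden $\mathfrak{sl}_2$-symmetry) and of connecting directly to the paper's theme that $\binom{n}{m}_q$ counts points of Schubert cells in the Grassmannian.
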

This means that the sequence of coefficients form a 
``lonely mountain'', with no oscillation: the coefficients (weakly) monotonously increase 
from $1$ to some maximum and then monotonously decrease back to~$1$.
This is an important property because unimodal sequences frequently appear in geometry and combinatorics; see~\cite{Sta}.
Proud of his theorem Sylvester wrote:
``I accomplished with scarcely an effort a task which I had believed lay outside the rang of human power.''
Several modern proofs of this theorem are known; see, e.g., \cite{Zei}, but none of them is elementary.

\subsection{What is... a $q$-analogue?}
A meaningful $q$-analogue must satisfy several requirements.

{\bf A}. 
Given a sequence of integers that counts some objects,
its $q$-analogue must count the same
objects, but with more details.
To illustrate this general principle, we will give only one example.

The classical  binomial coefficient
${n\choose m}$ counts the number of {\it north-east lattice paths}, or ``NS-paths''
(with no steps down or left) in  the $n\times(n-m)$-rectangle.
To simplify the language, we call $1\times1$ squares in the grid ``boxes''.
A classical theorem states the following.

\begin{thm}
\label{NEThm}
The coefficient of~$q^k$ in the polynomial~${n\choose m}_q$ is the number of NE-paths with
exactly~$k$ boxes under the path.
\end{thm}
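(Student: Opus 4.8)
The plan is to show that the area generating polynomial of the NE-paths satisfies exactly the weighted Pascal recurrence of Exercise~\ref{PascalEx}, and then to conclude by induction. To this end, introduce
\[
G_{n,m}(q):=\sum_{P} q^{a(P)},
\]
where $P$ runs over all NE-paths in the rectangle and $a(P)$ denotes the number of boxes lying under $P$. By construction the coefficient of $q^k$ in $G_{n,m}$ is precisely the number of paths having exactly $k$ boxes underneath, so the theorem is equivalent to the identity $G_{n,m}(q)=\binom{n}{m}_q$. I would first record the base cases: when the rectangle is degenerate, i.e.\ $m=0$ or $m=n$ so that one side has length zero, there is a unique monotone path with no box beneath it, whence $G_{n,m}(q)=1=\binom{n}{m}_q$. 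These anchor an induction on $n$.

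The heart of the argument is a bookkeeping of the enclosed area via an encoding by partitions. I would describe a path $P$ by the partition $\lambda(P)=(\lambda_1\ge\lambda_2\ge\cdots)$ whose $i$-th part $\lambda_i$ is the number of boxes under $P$ in the $i$-th row counted from the bottom. Monotonicity of the path guarantees that the sequence is weakly decreasing, that $\lambda$ fits inside the $m\times(n-m)$ box (each part $\le m$, at most $n-m$ parts), and that $a(P)=|\lambda|=\sum_i\lambda_i$. Consequently
\[
G_{n,m}(q)=\sum_{\lambda\subseteq(m^{\,n-m})}q^{|\lambda|},
\]
summed over all partitions fitting in the box; this presentation also makes transparent the monicity, the palindromic symmetry (via complementation $\lambda\mapsto$ its complement in the box), and the expected degree $m(n-m)$.

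To obtain the recurrence I would split this sum according to whether the bottom row of the box is completely filled. If the bottom row is not full, then $\lambda_1\le m-1$, so every part is at most $m-1$ and $\lambda$ fits in a box one unit narrower; such partitions are enumerated by $G_{n-1,m-1}(q)$. If the bottom row is full, it contributes exactly $m$ boxes, i.e.\ a factor $q^{m}$, and deleting it leaves a partition with one fewer available row, enumerated by $G_{n-1,m}(q)$. This yields
\[
G_{n,m}(q)=G_{n-1,m-1}(q)+q^{m}\,G_{n-1,m}(q),
\]
which is precisely the identity~\eqref{WPI}. Combined with the base cases, the induction gives $G_{n,m}(q)=\binom{n}{m}_q$, and reading off the coefficient of $q^k$ proves the theorem.

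The only delicate point, and the place where I would concentrate care, is the area bookkeeping in the splitting step: one must verify that removing the full bottom row (equivalently, the corresponding split of the path at its initial eastward run) shifts the area by \emph{exactly} $m$, so that the recursion reproduces the weight $q^{m}$ of~\eqref{WPI} rather than, say, $q^{\,n-m}$ that would arise from splitting on columns instead of rows. The partition encoding is exactly what makes this transparent, reducing the whole statement to an area-weighted refinement of the classical proof of the Pascal identity for ordinary binomial coefficients.
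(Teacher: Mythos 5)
Your argument is correct: the bijection between NE-paths and partitions in the $m\times(n-m)$ box, the area bookkeeping, and the split on whether the largest part equals $m$ all check out, and together with the base cases $m\in\{0,n\}$ the induction via~\eqref{WPI} does establish the theorem (your $G_{4,2}(q)=1+q+2q^2+q^3+q^4$ also matches the paper's worked example). The paper itself offers no proof — it labels the statement ``classical'' and only gives that example — but your route is precisely the one the surrounding text sets up, namely the weighted Pascal identity~\eqref{WPI} together with the remark that NE-paths are Young diagrams, so there is nothing substantive to contrast.
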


\begin{ex}
The polynomial ${4\choose 2}_q=1+q+2q^2+q^3+q^4$, counts  NE-paths in a 
$2\times2$-square.
There are ${4\choose 2}=6$ such paths. 
Furthermore, there is exactly one such path with either $0,1,3$, or $4$ under boxes,
and there are two paths with $2$ under boxes:
$$
\xymatrix @!0 @R=0.6cm @C=0.7cm
{
\bullet\ar@{-}[r]\ar@{-}[dd]&{\color{red}\bullet}\ar@{<=}[d]\ar@{=>}[r]
&{\color{red}\bullet}\ar@{-}[dd]\\
\bullet\ar@{-}[rr]&{\color{red}\bullet}\ar@{<=}[d]&\bullet
\\
{\color{red}\bullet}\ar@{=>}[r]
&{\color{red}\bullet}\ar@{-}[r]&\bullet
}
\qquad\qquad
\xymatrix @!0 @R=0.6cm @C=0.7cm
{
\bullet\ar@{-}[rr]\ar@{-}[d]&\bullet\ar@{-}[dd]
&{\color{red}\bullet}\ar@{<=}[d]\\
{\color{red}\bullet}\ar@{=>}[r]\ar@{<=}[d]&{\color{red}\bullet}\ar@{=>}[r]&{\color{red}\bullet}\ar@{-}[d]
\\
{\color{red}\bullet}\ar@{-}[rr]
&\bullet&\bullet
}
$$
This corresponds to the coefficients $1,1,2,1,1$ in the polynomial ${4\choose 2}_q$.
\end{ex}

Theorem~\ref{NEThm} means that {\it every coefficient} of the polynomial~${n\choose m}_q$
has a combinatorial meaning.
Every coefficient counts NE-paths of the same type (with the same number of under boxes).

\begin{rem}
A NE-path is also called a Young diagram.
This important notion is used in representation theory and combinatorics.
\end{rem}

{\bf B}.
Another requirement for a meaningful $q$-analogue is geometric.
Let us quote the classical book~\cite{StaEnum}.
{\it To be a ``satisfactory'' $q$-analogue more is required, but there is no precise definition of what is meant by ``satisfactory''.
Certainly one desirable property is that the original object concerns finite sets, 
while the q-analogue can be interpreted in terms of subspaces of 
finite-dimensional vector spaces over the finite field~$\mathbb{F}_q$.}
We will just touch on this and never use.
The reader unfamiliar with such higher algebra terms as ``Grassmannians'', or ``finite fields'' can skip this part.

A $q$-analogue must
count the number of points in a certain algebraic variety over the finite field~$\mathbb{F}_q$,
where $q=p^\ell$, and where~$p$ is a prime integer.
For instance, for our main characher, the Gaussian binomial we have the following.

\begin{thm}
The polynomial~${n\choose m}_q$, is equal to the number of 
$m$-dimensional subspaces in the $n$-dimensional space~$\mathbb{F}_q^n$.
\end{thm}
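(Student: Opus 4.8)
The plan is to count the $m$-dimensional subspaces of $\mathbb{F}_q^n$ by a double count and then check algebraically that the result is the polynomial~\eqref{qBin}. First I would count ordered $m$-tuples $(v_1,\dots,v_m)$ of linearly independent vectors. Once $v_1,\dots,v_i$ are chosen they span an $i$-dimensional subspace with exactly $q^i$ vectors, so $v_{i+1}$ can be any vector outside this span, giving $q^n-q^i$ choices. Multiplying over $i=0,\dots,m-1$ produces
\[
(q^n-1)(q^n-q)\cdots(q^n-q^{m-1})
\]
independent $m$-tuples.

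Next, since a fixed $m$-dimensional subspace $V$ is isomorphic to $\mathbb{F}_q^m$, the same argument with $n$ replaced by $m$ shows that $V$ has exactly $(q^m-1)(q^m-q)\cdots(q^m-q^{m-1})$ ordered bases. As every independent $m$-tuple spans a unique $V$, the number of $m$-dimensional subspaces equals
\[
\frac{(q^n-1)(q^n-q)\cdots(q^n-q^{m-1})}{(q^m-1)(q^m-q)\cdots(q^m-q^{m-1})}.
\]
To finish I would simplify this ratio: writing $q^n-q^i=q^i(q^{n-i}-1)$ and likewise in the denominator, the common factor $q^{0+1+\cdots+(m-1)}$ cancels, and using $q^k-1=(q-1)[k]_q$ from~\eqref{qBn} the powers of $q-1$ cancel as well, leaving $\frac{[n]_q!}{[n-m]_q!\,[m]_q!}$, which is ${n\choose m}_q$ by~\eqref{qBin}.

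I do not expect a genuine obstacle; the argument is elementary, and the only point needing care is the bookkeeping in this last simplification. If one prefers to avoid it, there is an inductive alternative matching the $q$-Pascal recurrence of Exercise~\ref{PascalEx}. Writing $G(n,m)$ for the number of $m$-dimensional subspaces of $\mathbb{F}_q^n$ and projecting $\mathbb{F}_q^n=\mathbb{F}_q^{n-1}\oplus\langle e\rangle$ onto $\mathbb{F}_q^{n-1}$ along $e$ by a map $\pi$, I would split the subspaces $V$ according to $\dim\pi(V)$: if $\dim\pi(V)=m-1$ then $e\in V$ and $V=\pi(V)\oplus\langle e\rangle$ is determined by an $(m-1)$-dimensional subspace, contributing $G(n-1,m-1)$; if $\dim\pi(V)=m$ then $V$ is the graph of a linear functional on $\pi(V)$, of which there are $q^m$, contributing $q^m\,G(n-1,m)$. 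This reproduces ${n\choose m}_q={n-1\choose m-1}_q+q^m{n-1\choose m}_q$, and here the delicate step is precisely identifying the fibre of $\pi$ with $\Hom(\pi(V),\mathbb{F}_q)$ to obtain the weight $q^m$.
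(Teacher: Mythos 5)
Your argument is correct: the double count of ordered linearly independent $m$-tuples, divided by the number of ordered bases of a fixed $m$-dimensional subspace, simplifies exactly as you say to $\frac{[n]_q!}{[n-m]_q!\,[m]_q!}$, and your inductive alternative via the fibration over $\mathbb{F}_q^{n-1}$ correctly reproduces the $q$-Pascal identity~\eqref{WPI}. The paper itself gives no proof, referring instead to Stanley's book~\cite{StaEnum}, and the ``simple proof'' found there is precisely your first argument, so there is nothing to add.
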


A simple proof can be found in Stanley's book~\cite{StaEnum}.
Once again, every coefficient has it's own meaning, it counts the number of points
in the Schubert celles of the same dimension!

{\bf C}. 
A more analytic requirement:
the same $q$-analogue
appears in ``quantum calculus'', or ``quantum algebra''.
Once again, we give an example related to the $q$-binomials.
Assume that the non-commuting variables $x$ and $y$ satisfy the relation $yx=qxy$ of the ``quantum plane'';
the $q$-binomial theorem then states that
$$
(x+y)^n=\sum_{0\leq{}m\leq{}n}{n\choose m}_qx^my^{n-m},
$$
producing the $q$-binomials.

The reason for which the same polynomial appear in completely different situations is a great mystery!

\begin{rem}
 A historical anecdote can be told here.
By tradition, the letter $q$ is often used to denote the variable in~\eqref{qBn}
 (and not $x,y,z,\ldots$).
 Euler used $q$ as an abbreviation for ``quotient'', and hundreds of authors did so after him
 well before the quantum era.
 The same letter $q$ is used in the theory of quantum groups, quantum calculus, etc., where the polynomials~\eqref{qBn}
 appear inevitably, but as an abbreviation for ``quantum''.
 Speculation, or miraculous coincidence, who knows?
 
 The skeptical reader may ask, but why the same letter $q$ denotes the finite field~$\mathbb{F}_q$?
 Well, in this context, $q$ is a power of a prime number: $q=p^k$ and $p$ stays for ``prime'', while
 $q$ is alphabetically the next letter!
 Not very convincing but you can't escape fate!
 \end{rem}

\subsection{What is... the $q$-analogue of a rational number?}
Let $x=\frac{n}{m}$ be a rational number, we want to define its $q$-analogue.
A very naive idea would be to consider the quotient of two $q$-integers $\frac{[n]_q}{[m]_q}$.
However, such a quotient contain no additional information, and does not satisfy any version of
properties {\bf A}, {\bf B}, and {\bf C}.
The expression $\frac{1-q^x}{1-q}$ is another tempting candidate, but replacing $q$ by~$q^m$
we obtain exactly the same non-interesting quotient $\frac{[n]_q}{[m]_q}$.

The $q$-analogue discussed here is based on
the geometric idea of invariance by a group action.
The set of rationals completed by one additional point, $\Q\cup\{\infty\}$, admits a transitive action of
the group $\PSL(2,\Z)$ of unimodular $2\times2$ matrices with integer coefficients, called the {\it modular group}.
This group also naturally act on the space $\Z(q)$ of rational functions in~$q$ with integer coefficients.
We will describe a map 
$$
[\,.\,]_q:\Q\cup\{\infty\}\to\Z(q)
$$ 
commuting with the $\PSL(2,\Z)$-action.
Such a map is unique if we know the image of one point, say if we know that $[0]_q=0$.
We thus define a $q$-analogue of a rational number which is an irreducible quotient of
two polynomials with integer coefficients: $\left[\frac{n}{m}\right]_{q}=\frac{\cN(q)}{\cM(q)}$.
Both polynomials, $\cN(q)$ and~$\cM(q)$, depend on $n$ and~$m$, and for $q=1$ we recover the initial rational:
$\cN(1)=n$ and $\cM(1)=m$.

The notion of $q$-analogue of rationals defined this way satisfies many interesting properties,
some of them are very similar to those of the $q$-binomials.
The main property is the ``stabilization phenomenon'' consisting in convergence of power series.
It leads to the notion of
$q$-analogue of an irrational number.

\section{The group of symmetries}\label{SymSec}

Rational numbers will always be represented by reduced quotients $\frac{n}{m}$ of two integers.
We will be considering the set $\Q\cup\{\infty\}$ of rationals, completed by one additional point, $\infty$,
represented by~$\frac{1}{0}$.

\subsection{The action of $\SL(2,\Z)$ on rationals, the modular group $\PSL(2,\Z)$}
The set $\Q\cup\{\infty\}$ admits an action of
the group $\SL(2,\Z)$ by fractional-linear transformations
\begin{equation}
\label{LFAct}
\begin{pmatrix}
a&b\\
c&d
\end{pmatrix}\left(x\right)=
\frac{ax+b}{cx+d},
\qquad\qquad
a,b,c,d\in\Z,
\quad
ad-bc=1.
\end{equation}

\begin{exe}
\label{TransEx}
(i) Check that~\eqref{LFAct} is indeed an action, i.e. if $A,B\in\SL(2,\Z)$
and $x\in\Q\cup\{\infty\}$, then $AB(x)=A(B(x))$.

(ii) Prove that the action~\eqref{LFAct} is {\it transitive}, i.e., for every~$x,y\in\Q\cup\{\infty\}$,
there exists $A\in\SL(2,\Z)$, such that $A(x)=y$.
\end{exe}

The set $\Q\cup\{\infty\}$ equipped with this action is naturally understood as the rational projective line
$\Q\mathbb P^1$.
The center of $\SL(2,\Z)$ consists of two matrices, $\pm\Id$, and its acts trivially.
The action becomes efficient, or faithful, if one considers the quotient  
$$
\PSL(2,\Z):=\SL(2,\Z)/\{\pm\Id\}
$$
called the modular group.

Let us describe the structure of $\PSL(2,\Z)$ in an ``algebraic manner'',
that is in terms of generators and relations.
Such description of a group is usually called a {\it presentation}.
We have the following old classical theorem.

\begin{thm}
\label{GenRel}
(i) The group $\PSL(2,\Z)$ is generated by two elements
$$
T=
\begin{pmatrix}
1&1\\[2pt]
0&1
\end{pmatrix},
\qquad\qquad
S=
\begin{pmatrix}
0&-1\\[2pt]
1&0
\end{pmatrix}.
$$

(ii) These generators satisfy the relations
\begin{equation}
\label{Rel}
S^2=(TS)^3=1.
\end{equation}

(iii)$^*$ Every relation between $S$ and $T$ is a consequence of~\eqref{Rel}.
\end{thm}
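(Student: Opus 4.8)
The plan is to dispatch the three parts in increasing order of difficulty. Part (ii) is a direct computation in $\SL(2,\Z)$: one checks $S^2=\begin{pmatrix}-1&0\\0&-1\end{pmatrix}=-\Id$ and, writing $TS=\begin{pmatrix}1&-1\\1&0\end{pmatrix}$, that $(TS)^3=-\Id$ as well. Since $-\Id$ is the identity of $\PSL(2,\Z)$, both relations $S^2=(TS)^3=1$ hold in the quotient.

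For part (i) I would run a Euclidean-algorithm reduction. Given $A=\begin{pmatrix}a&b\\c&d\end{pmatrix}\in\SL(2,\Z)$, left multiplication by $T^n$ replaces the first row by (first row) $+\,n\cdot$(second row), while $S$ swaps the two rows up to sign. Because $\det A=1$ forces $\gcd(a,c)=1$, alternately applying a suitable power of $T$ to shrink $|a|$ modulo $c$ and then $S$ to swap realizes the Euclidean algorithm on the pair $(a,c)$ and terminates with $c=0$. A matrix with $c=0$ in $\SL(2,\Z)$ is $\pm T^{b}$, which equals $T^{b}$ in $\PSL(2,\Z)$. Reading the reduction backwards exhibits $A$ as a word in $S$ and $T$, which proves generation.

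The substance of the theorem is part (iii). The plan is to identify $\PSL(2,\Z)$ with the free product $(\Z/2\Z)*(\Z/3\Z)$. Set $U=TS$; by (ii), $S$ has order $2$ and $U$ has order $3$, and since $T=US$ the pair $(S,U)$ generates the same group as $(S,T)$. A Tietze transformation turns the presentation $\langle s,t\mid s^2=(ts)^3=1\rangle$ into $\langle s,u\mid s^2=u^3=1\rangle$, which is exactly the standard presentation of $(\Z/2\Z)*(\Z/3\Z)$. Parts (i)--(ii) already give a surjection from this abstract free product onto $\PSL(2,\Z)$; what remains, and this is the crux, is to prove that this surjection is injective, i.e.\ that no nontrivial reduced word alternating between the two cyclic factors maps to the identity.

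To establish injectivity I would invoke the ping-pong lemma, using that the same fractional-linear formula~\eqref{LFAct} acts on $\R$. Take $X_1=\R_{>0}$ and $X_2=\R_{<0}$, two disjoint nonempty sets. Using $S(x)=-1/x$, $U(x)=1-\tfrac1x$ and $U^{2}(x)=\tfrac1{1-x}$, one verifies the ping-pong inclusions: the nontrivial powers $U,U^{2}$ send $X_2$ into $X_1$, while $S$ sends $X_1$ into $X_2$. Since the factor $\langle U\rangle$ has order $\ge 3$, the ping-pong lemma concludes that $\langle S,U\rangle=\langle S\rangle*\langle U\rangle\cong(\Z/2\Z)*(\Z/3\Z)$, so the surjection above is an isomorphism and every relation is a consequence of~\eqref{Rel}. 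The main obstacle is precisely this injectivity step: the verification of the inclusions must be done with care (in particular pinning down $U(X_2)\subseteq(1,\infty)$ and $U^2(X_2)\subseteq(0,1)$, and checking that the relevant poles $0$ and $1$ avoid $X_1,X_2$), and one must use the form of the ping-pong lemma valid when one free factor has order $2$ — which is why the factor of order $3$ is assigned the role requiring order $\ge 3$.
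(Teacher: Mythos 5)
Your proposal is correct and follows exactly the route the paper indicates: it leaves parts (i) and (ii) to the Euclidean algorithm and a direct matrix check, and for part (iii) it defers to the cited reference of Alperin, whose argument is precisely your identification of $\PSL(2,\Z)$ with $(\Z/2\Z)*(\Z/3\Z)$ via ping-pong on the positive and negative reals with $S$ and $U=TS$. Your verification of the inclusions $U(X_2)\subseteq(1,\infty)$, $U^2(X_2)\subseteq(0,1)$, $S(X_1)\subseteq X_2$ and your attention to which factor must have order at least $3$ are the right details to pin down.
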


An elementary proof can be found in~\cite{Alp}.
The reader can deduce Part~(i) from the Euclid algorithm, and then easily check~\eqref{Rel}.
Part~(iii) is more involved.

\begin{rem}
Note that~$1$ in the right hand side of~\eqref{Rel} is due to the quotient of $\SL(2,\Z)$ by the center.
The square of $S$ and the cube of $TS$ are equal to $-\Id$, the negation of the identity matrix, 
but the corresponding linear-fractional transformations
$$
T(x)=
x+1,
\qquad\qquad
S(x)=-\frac{1}{x}
$$
satisfy~\eqref{Rel}.
\end{rem}

\subsection{The action of $\PSL(2,\Z)$ on rational functions}\label{PSLAct}
Let us describe a natural action of $\PSL(2,\Z)$ on the space of rational functions $\Z(q)$.

Consider the matrices
$$
T_{q}=
\begin{pmatrix}
q&1\\[2pt]
0&1
\end{pmatrix},
\qquad\qquad
S_{q}=
\begin{pmatrix}
0&-1\\[2pt]
q&{\phantom -}0
\end{pmatrix}
$$
depending on a formal variable~$q$.
Let $T_{q}$ and $T_{q}$ act on~$\Z(q)$ by linear-fractional transformations~\eqref{LFAct}:
\begin{equation}
\label{AcTq}
T_q\left(X(q)\right)=qX(q)+1,
\qquad\qquad
S_q\left(X(q)\right)=-\frac{1}{qX(q)}.
\end{equation}

\begin{exe}
\label{SameRel}
Check that the maps~\eqref{AcTq}, 
that (slightly abusing the notation) we also denote by $T_{q}$ and $S_{q}$, 
satisfy exactly the same relations as $T$ and $S$:
\begin{equation}
\label{Relq}
S_q^2=(T_qS_q)^3=1.
\end{equation}
This implies that the operators~\eqref{AcTq} generate a $\PSL(2,\Z)$-action on $\Z(q)$.
\end{exe}

\begin{rem}
Let us explain the reason for which we have chosen the $\PSL(2,\Z)$-action generated by~\eqref{AcTq}.
The polynomials~\eqref{qBn} satisfy the obvious relation
$[n+1]_q=q[n]_q+1.$
The operator $T_q$ in~\eqref{AcTq} is chosen to match with the
$q$-integers.
The operator~$S_{q}$ turns out to be the only operator satisfying the relations~\eqref{Relq} with the chosen~$T_q$.
\end{rem}

\section{Introducing $q$-rationals}\label{ItroSec}

Whenever we have an action of some group on two different sets, it is natural to study invariant maps.
The notion of $q$-deformed rational numbers is entirely based on this idea.

\begin{thm}
\label{ExThm}
There exists a unique map
$[\,.\,]:\Q\cup\{\infty\}\to\Z(q)$
commuting with the $\PSL_2(\Z)$-action and such that $[0]_q=0.$
\end{thm}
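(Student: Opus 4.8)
The plan is to exploit transitivity of the $\PSL(2,\Z)$-action to force the values of $[\,\cdot\,]_q$, and then to verify consistency using the stabilizer of the base point $0$. Throughout I use Theorem~\ref{GenRel} and Exercise~\ref{SameRel}: since $T_q$ and $S_q$ satisfy the defining relations~\eqref{Rel} of $\PSL(2,\Z)$, the assignment $A\mapsto A_q$ (extending $T\mapsto T_q$, $S\mapsto S_q$) is a genuine group homomorphism into the fractional-linear transformations of $\Z(q)$, so that $(AB)_q=A_q\circ B_q$. This is what makes the operator $A_q$ attached to an arbitrary $A\in\PSL(2,\Z)$ well defined.

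For uniqueness, suppose $[\,\cdot\,]_q$ commutes with the action and $[0]_q=0$. Given any $x\in\Q\cup\{\infty\}$, transitivity (Exercise~\ref{TransEx}(ii)) provides $A\in\PSL(2,\Z)$ with $A(0)=x$, and commutativity then forces $[x]_q=[A(0)]_q=A_q([0]_q)=A_q(0)$. Hence the map, if it exists, is completely determined by the single value $[0]_q=0$.

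For existence I take this formula as a definition, $[x]_q:=A_q(0)$ for any $A$ with $A(0)=x$, and the whole content of the proof is to show that this is independent of the choice of $A$. If $A(0)=B(0)$, then $C:=B^{-1}A$ fixes $0$; since $A_q=B_q\circ C_q$, it suffices to prove $C_q(0)=0$ for every $C$ in the stabilizer $\mathrm{Stab}(0)$. A matrix fixes $0=\tfrac{0}{1}$ exactly when its upper-right entry vanishes, so $\mathrm{Stab}(0)$ is the infinite cyclic group generated by $U=\begin{pmatrix}1&0\\1&1\end{pmatrix}$. Writing $U=ST^{-1}S$ in $\PSL(2,\Z)$ and using~\eqref{AcTq}, a direct computation gives $U_q(0)=0$ (indeed $U_q$ is the fractional-linear map $X\mapsto \frac{qX}{qX+1}$); as $U_q$ fixes $0$, so does every power $U_q^{\,c}$, whence $C_q(0)=0$ for all $C\in\mathrm{Stab}(0)$. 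This establishes well-definedness. The commuting property is then automatic: for $B\in\PSL(2,\Z)$ and $x=A(0)$ one has $[B(x)]_q=(BA)_q(0)=B_q(A_q(0))=B_q([x]_q)$, and $[0]_q=\Id_q(0)=0$.

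I expect the single genuine obstacle to be this well-definedness step, that is, the compatibility of the two actions along $\mathrm{Stab}(0)$; everything else is a formal consequence of transitivity and of the homomorphism property from Exercise~\ref{SameRel}. One small point of care: since $S_q(0)=\infty$, the target should be read projectively (as $\Z(q)\cup\{\infty\}$, with $[\infty]_q=\infty$), and the evaluation of $U_q(0)$ passes through $\infty$; this is harmless because $U_q$ is a bona fide fractional-linear transformation.
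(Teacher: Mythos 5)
Your proof is correct, and the uniqueness half coincides with the paper's (transitivity forces $[x]_q=A_q(0)$). For existence, however, you take a genuinely different route. The paper establishes existence by exhibiting the explicit formula~\eqref{qc} in terms of the Hirzebruch continued fraction and verifying, by induction on the length of the expansion (Lemma~\ref{FirstLm}), that this formula intertwines the two actions; formula~\eqref{qa} then gives the version for regular continued fractions. You instead define $[x]_q:=A_q(0)$ abstractly and reduce well-definedness to an orbit--stabilizer computation: $\mathrm{Stab}(0)$ is generated by $U=\left(\begin{smallmatrix}1&0\\1&1\end{smallmatrix}\right)=ST^{-1}S$, and $U_q(X)=\frac{qX}{qX+1}$ fixes $0$. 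Both arguments are sound, and both ultimately rest on part~(iii) of Theorem~\ref{GenRel} (so that $A\mapsto A_q$ is a genuine homomorphism, independent of the word in $T,S$ chosen); your computation of $U_q$ and your identification of the stabilizer are correct, and your remark that the target must be read projectively (since $S_q(0)=\infty$) is a point the paper handles only implicitly via $[\frac{1}{0}]_q=\frac{1}{0}$. What your argument buys is brevity and conceptual clarity --- existence follows from a single one-line computation at the base point, and the same scheme later explains the ``left'' $q$-rationals~\eqref{Flat}, where the other fixed point $\frac{q-1}{q}$ of $U_q$ appears. What it does not deliver is the closed formula $\left[\frac{n}{m}\right]_q=\frac{\cN(q)}{\cM(q)}$ itself, which is the workhorse for everything that follows in the paper (positivity, unimodality, stabilization); the paper's longer inductive construction is really aimed at producing~\eqref{qc} and~\eqref{qa}, not merely at proving that some equivariant map exists.
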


The goal of this section is to describe this quantization map explicitly and give  examples of $q$-rationals.
We also outline the proof of the theorem.
The uniqueness statement follows from transitivity of the $\PSL_2(\Z)$-action on~$\Q\cup\{\infty\}$,
the existence will be established by providing explicit formulas.
Note that although zero seems to be the most natural choice of ``quantum zero'', but there is a second one;
see Section~\ref{FlatSec} below.

\subsection{Continued fractions}
To give an explicit formula of $q$-rationals, we will use continued fractions, 
a classical way to represent numbers; see Lecture~1.

Every rational number~$\frac{n}{m}$ can be written as a finite continued fraction
$$
\frac{n}{m}
\quad=\quad
a_0 + \cfrac{1}{a_1 
          + \cfrac{1}{\ddots +\cfrac{1}{a_{k}} } },
          $$
with integer coefficients~$a_i$, such that $a_i\geq1$ for $i\geq1$ and arbitrary~$a_0$.
It is usually called the regular continued fraction expansion and is denoted by  $\frac{n}{m}=[a_0,a_1,\ldots,a_{k}]$.
The continued fraction expansion is (almost!) unique,
except for the ambiguity $[a_0,\ldots,a_{k},1]=[a_0,\ldots,a_{k}+1]$
that can be removed by choosing an even or odd number of coefficients.

There is also a unique continued fraction expansion 
with minus signs
$$
\frac{n}{m}
 \quad =\quad
c_0 - \cfrac{1}{c_1 
          - \cfrac{1}{\ddots - \cfrac{1}{c_\ell} } } ,
$$
where $c_j$ are integers such that $c_j\geq2$ (except for $c_0$).
It is less known and is someztimes called the ``Hirzebruch continued fraction''.
The notation for such expansion used by Hirzebruch is $\frac{n}{m}=\llbracket{}c_0,c_1,\ldots,c_\ell\rrbracket{}$.
The coefficients $a_i$ and $c_j$ of the two expansions
are connected by a Hirzebruch formula~\cite{Hir}.
It is interesting to mention that Coxeter also used continued fractions with minus signs
connecting them with his frieze patterns; see Lecture 5.

\subsection{Exact formulas}
The explicit formula for $q$-rationals is easier to obtain in terms of the Hirzebruch expansion.
Every coefficient $c_j$ is replaced by its $q$-analogue~\eqref{qBn}, and every~$1$ in the numerators by a power of~$q$.
More precisely, we have the following.

\begin{lem}
\label{FirstLm}
The map $\frac{n}{m}\mapsto\left[\frac{n}{m}\right]_q$, where
\begin{equation}
\label{qc}
\left[\frac{n}{m}\right]_q
 \quad =\quad
[c_0]_{q} - \cfrac{q^{c_{0}-1}}{[c_1]_{q} 
          - \cfrac{q^{c_{1}-1}}{\ddots \cfrac{\ddots}{[c_{\ell-1}]_{q}- \cfrac{q^{c_{\ell-1}-1}}{[c_\ell]_{q}} } }} 
\end{equation}
commutes with the $\PSL_2(\Z)$-action.
\end{lem}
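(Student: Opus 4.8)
The plan is to exploit Theorem~\ref{GenRel}, that $\PSL(2,\Z)$ is generated by $T$ and $S$, together with Exercise~\ref{SameRel}, which guarantees a well-defined action of $\PSL(2,\Z)$ on $\Z(q)$, i.e.\ a homomorphism $\vr$ into the group of linear-fractional transformations of $\Z(q)$ determined by $\vr(T)=T_q$ and $\vr(S)=S_q$. The first step is to rewrite both the classical Hirzebruch expansion and the formula~\eqref{qc} as a single word applied to the base point $\infty$. Setting $M_c=T^cS$, one computes $M_c=\begin{pmatrix}c&-1\\1&0\end{pmatrix}$, so $M_c(y)=c-\frac1y$ and hence $\frac nm=\llbracket c_0,\dots,c_\ell\rrbracket=(M_{c_0}M_{c_1}\cdots M_{c_\ell})(\infty)$. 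On the $q$-side I would first check by induction that $T_q^c(X)=q^cX+[c]_q$ for all $c\in\Z$, so that $M_{c,q}:=\vr(T^cS)=T_q^cS_q$ acts by $M_{c,q}(y)=[c]_q-\frac{q^{c-1}}{y}$ and $M_{c,q}(\infty)=[c]_q$. Iterating $M_{c_\ell,q},\dots,M_{c_0,q}$ on $\infty$ then reproduces verbatim the right-hand side of~\eqref{qc}; that is,
\begin{equation*}
\left[\tfrac nm\right]_q=\vr(w)(\infty),\qquad w:=M_{c_0}\cdots M_{c_\ell}=T^{c_0}ST^{c_1}S\cdots T^{c_\ell}S,
\end{equation*}
where $w$ is the canonical word coming from the unique Hirzebruch expansion of $\frac nm$.

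The crux of the argument is then to show that the value $\vr(u)(\infty)$ depends only on the rational number $u(\infty)$, and not on the chosen word $u$. Granting this, the lemma is immediate: for any $g\in\PSL(2,\Z)$ write $x=u(\infty)$ and note $g(x)=(gu)(\infty)$, whence
\begin{equation*}
[g(x)]_q=\vr(gu)(\infty)=\vr(g)\bigl(\vr(u)(\infty)\bigr)=\vr(g)\bigl([x]_q\bigr),
\end{equation*}
which is exactly the commutation with the $\PSL(2,\Z)$-action, since $\vr(g)$ is by definition the action of $g$ on $\Z(q)$.

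To prove the independence claim---the step I expect to be the main obstacle---I would argue as follows. If $u(\infty)=u'(\infty)$, then $u'^{-1}u$ fixes $\infty$, so it lies in the stabilizer of $\infty$ in $\PSL(2,\Z)$, which is the cyclic subgroup $\langle T\rangle$. Thus $u=u'T^k$ for some $k\in\Z$, and since $T_q$ is represented by an upper-triangular matrix it fixes $\infty$, giving $\vr(T^k)(\infty)=\infty$ and hence
\begin{equation*}
\vr(u)(\infty)=\vr(u')\bigl(\vr(T^k)(\infty)\bigr)=\vr(u')(\infty).
\end{equation*}
The delicate point is that this uses the full force of the presentation: that $\vr$ is genuinely well-defined requires part~(iii) of Theorem~\ref{GenRel} (every relation among $S,T$ follows from~\eqref{Rel}), as invoked in Exercise~\ref{SameRel}, and the consistency of the whole scheme rests on the computation of the stabilizer of $\infty$ together with the fixed-point property of $T_q$.

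As a concrete cross-check, the commutation on the two generators reduces to the functional equations $[x+1]_q=q[x]_q+1$ and $[-1/x]_q=-\frac{1}{q[x]_q}$. The first follows at once from~\eqref{qc}, because $x\mapsto x+1$ simply increments $c_0$ (so $[c_0]_q\mapsto[c_0+1]_q=q[c_0]_q+1$ and $q^{c_0-1}\mapsto q^{c_0}$), while the second is the genuinely nontrivial relation that the matrix argument above handles uniformly, without having to analyze how $S$ reshuffles the Hirzebruch coefficients.
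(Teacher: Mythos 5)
Your proof is correct, and it takes a genuinely different (and in fact more complete) route than the paper's. The paper argues by induction on the length $\ell$ of the Hirzebruch expansion: writing $\frac{n}{m}=T^{c_0}S\bigl(\frac{n'}{m'}\bigr)$ with $\frac{n'}{m'}=\llbracket c_1,\dots,c_\ell\rrbracket$, it observes that equivariance forces $\bigl[\frac{n}{m}\bigr]_q=T_q^{c_0}S_q\bigl(\bigl[\frac{n'}{m'}\bigr]_q\bigr)$ and that unwinding this recursion reproduces~\eqref{qc}. As written, that induction really establishes the \emph{uniqueness} direction (the formula is the only candidate consistent with equivariance) together with the recursion $\llbracket c_0,\dots,c_\ell\rrbracket\mapsto T_q^{c_0}S_q$ applied to the tail; it does not by itself explain why the map so defined commutes with \emph{every} element of $\PSL(2,\Z)$ --- in particular with $S$, whose effect on Hirzebruch coefficients is not transparent. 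Your argument supplies exactly the missing content: you package the expansion as the word $w=T^{c_0}S\cdots T^{c_\ell}S$ acting on the base point $\infty$, use the presentation of Theorem~\ref{GenRel}(iii) together with the relations~\eqref{Relq} to get a genuine homomorphism $\vr$, and reduce equivariance to the statement that $\vr(u)(\infty)$ depends only on $u(\infty)$, which you settle by identifying the stabilizer of $\infty$ as $\langle T\rangle$ and noting that $T_q$ fixes $\infty$. The computational core ($T_q^cS_q(y)=[c]_q-q^{c-1}/y$, matching~\eqref{qc}) is the same as the paper's; what your organization buys is a clean, non-inductive justification of full $\PSL(2,\Z)$-equivariance, at the price of invoking the completeness of the presentation, which the paper's sketch leaves implicit. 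One small point worth making explicit if you write this up: the identity $T_q^c(X)=q^cX+[c]_q$ is needed for all $c\in\Z$ (the Hirzebruch coefficient $c_0$ may be negative or zero), and it does hold there with $[c]_q=\frac{1-q^c}{1-q}$, so your step is fine.
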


\begin{proof}
We will proceed by induction on the number of coefficients~$\ell$.
For integers~\eqref{qc} coincides with~\eqref{qBn}.
Assume that~\eqref{qc} is true for $\frac{n'}{m'}=\llbracket{}c_1,\ldots,c_\ell\rrbracket{}$.
Then $\frac{n}{m}=c_0-\frac{m'}{n'}$.
In other words, $\frac{n}{m}$ is obtained from~$\frac{n'}{m'}$ applying the generators 
 $S$ and $T$ of $\PSL_2(\Z)$ as follows
$$
\frac{n}{m}=T^{c_0}S\left(\frac{n'}{m'}\right).
$$
Given $\left[\frac{n'}{m'}\right]_q$, the $\PSL_2(\Z)$-invariance then reads
$
\left[\frac{n}{m}\right]_q=
T_q^{c_0}S_q\left(\left[\frac{n'}{m'}\right]_q\right).
$
Using the iteration of~\eqref{AcTq}, one then obtains 
$$
\left[\frac{n}{m}\right]_q=
\left[c_0\right]_q-q^{c_0-1}\left[\frac{m'}{n'}\right]_q.
$$
and hence~\eqref{qc}.
\end{proof}

The following recurrence is a corollary of~\eqref{qc}.
Let $\frac{n_i}{m_i}=\llbracket{}c_0,\ldots,c_i\rrbracket{}$ be the consecutive convergents of
the Hirzebruch continued fraction,
and $\left[\frac{n_i}{m_i}\right]_q=:\frac{\cN_i(q)}{\cM_i(q)}$ be their quantization, then
\begin{equation}
\label{Recur}
\begin{array}{rcl}
\cN_{i+1} &=& [c_{i+1}]_q\cN_{i}-q^{c_i-1}\cN_{i-1}, \\[4pt]
\cM_{i+1} &=& [c_{i+1}]_q\cM_{i}-q^{c_i-1}\cM_{i-1},
\end{array}
\end{equation}

The explicit formula in terms of the regular continued fraction expansion is more complicated.
The coefficients $a_i$ are again replaced by their $q$-analogues according to~\eqref{qBn}, but this time
the parameter~$q$ is changed to~$q^{-1}$ when~$i$ is odd.

\begin{exe}
\label{Ex9}
Check that~\eqref{qa} commutes with the $\PSL_2(\Z)$-action and thus is consistent with~\eqref{qc}.
\begin{equation}
\label{qa}
\left[\frac{n}{m}\right]_q
 \quad =\quad
[a_0]_{q} + \cfrac{q^{a_{0}}}{[a_1]_{q^{-1}} 
          + \cfrac{q^{-a_{1}}}{[a_{2}]_{q} 
          +\cfrac{q^{a_{2}}}{[a_{3}]_{q^{-1}}
          + \cfrac{q^{-a_{3}}}{
        \cfrac{\ddots}{+\cfrac{q^{(-1)^{k-1}a_{k-1}}}{[a_{k}]_{q^{(-1)^k}}}}}
          } }} 
\end{equation}
\end{exe}

Formulas~\eqref{qc} and~\eqref{qa} imply the existence part of Theorem~\ref{ExThm}.

\subsection{Examples of $q$-rationals, $q$-Fibonacci and $q$-Pell numbers}\label{FiboPell}
We start with simple examples of $q$-rationals.

\begin{exe}
\label{LEx}
(i)
Show that exactly three elements of $\Q\cup\infty$ are preserved by the quantization map
$$
[0]_q=0,
\qquad 
[1]_q=1, 
\qquad 
\left[\frac{1}{0}\right]_q=\frac{1}{0}.
$$
For any other $x\in\Q\cup\infty$ the result $\left[x\right]_q$ depends on~$q$ explicitly.

(ii)
Calculate using~\eqref{qc} or~\eqref{qa}
$$
\left[\frac{1}{2}\right]_{q}=
\frac{q}{1+q},
\qquad
\left[\frac{5}{2}\right]_{q}=
\frac{1+2q+q^{2}+q^{3}}{1+q},
\qquad
\left[\frac{5}{3}\right]_{q}=
\frac{1+q+2q^{2}+q^{3}}{1+q+q^{2}}.
$$
\end{exe}

The last example is a term of a quite remarkable series of rationals $\frac{F_{n+1}}{F_n}$,
where $F_n=1,1,2,3,5,8,13,\ldots$ are  the  Fibonacci numbers.
The consecutive quotients, $\frac{F_{n+1}}{F_n}$, lead to an interesting sequence of rational functions:
$[1]_q=1,\,[2]_q=1+q,\,\left[\frac{3}{2}\right]_q=\frac{1+q+q^2}{1+q},$ and then $\left[\frac{5}{3}\right]_{q}$,
followed by
$$
\begin{array}{rcl}
\left[\frac{8}{5}\right]_q&=&\displaystyle\frac{1+2q+2q^2+2q^3+q^4}{1+2q+q^2+q^3},
\\[12pt]
\left[\frac{13}{8}\right]_q&=&\displaystyle\frac{1+2q+3q^2+3q^3+3q^4+q^5}{1+2q+2q^2+2q^3+q^4},
\\[12pt]
\left[\frac{21}{13}\right]_q&=&\displaystyle\frac{1+3q+4q^2+5q^3+4q^4+3q^5+q^6}{1+3q+3q^2+3q^3+2q^4+q^5},
\\
\ldots&\ldots&\ldots
\end{array}
$$
We obtain
$\left[\frac{F_{n+1}}{F_{n}}\right]_q=
\frac{\tilde\F_{n+1}(q)}{\F_n(q)},$ 
the polynomials in the numerator and denominator are of degree~$n-2$ (for~$n\geq2$)
and are mirror of each other:
$
\tilde\F_n(q)=q^{n-2}\F_n(q^{-1}).
$
They can be calculated using the recurrence
$$
\F_{n+2}(q)= [3]_q\,\F_n(q)-q^2\F_{n-2}(q),
$$
which follows from~\eqref{Recur}.
 
The coefficients of the polynomials in the numerators and
denominators turn out to match the known triangular integer sequences~A123245 and~A079487 of~OEIS,
(which are mirrors of each other).

\begin{rem}
Every time a new procedure leads to something known, it is a good news for a researcher
working on the subject, since known properties can shade light on the whole picture.
Such properties can eventually be generalized and become general theorems.
This happened with $q$-Fibonacci numbers and the golden ratio, see Section~\ref{GoldSec} below
and was crucial for \cite{MGOfmsigma} and~\cite{MGOexp}.
\end{rem}

The $q$-rational $\left[\frac{5}{2}\right]_{q}$ of Example~\ref{LEx} is also a starting point of an interesting series.
Consider the classical sequence of Pell numbers $P_n=1, 2, 5, 12, 29, 70, 169,\ldots$
The Pell numbers are almost as famous and old as the Fibonacci numbers.
They are characterized by two first values $1,2$ and the recurrence 
$$
P_n=2P_{n-1}+P_{n-2}.
$$
The rational sequence~$\frac{P_{n+1}}{P_n}$ is known (since Pythagorean time) to approximate $1+\sqrt{2}$.
The $q$-analogue of the rational sequence~$\frac{P_{n+1}}{P_n}$ starts as follows.
$$
\begin{array}{rcl}
\left[\frac{12}{5}\right]_q&=&\displaystyle\frac{1+2q+3q^2+3q^3+2q^4+q^5}{1+q+2q^2+q^3},
\\[12pt]
\left[\frac{29}{12}\right]_q&=&\displaystyle\frac{1+3q+5q^2+6q^3+6q^4+5q^5+2q^6+q^7}{1+2q+3q^2+3q^3+2q^4+q^5},
\\[12pt]
\left[\frac{70}{29}\right]_q&=&\displaystyle\frac{1+3q+7q^2+11q^3+13q^4+13q^5+11q^6+7q^7+3q^8+q^9}{1+2q+5q^2+6q^3+6q^4+5q^5+3q^6+q^7},
\\
\ldots&\ldots&\ldots
\end{array}
$$
It produces a $q$-analogue of the sequence of Pell numbers, that
was not known before~\cite{MGOfmsigma} and was recently included into OEIS as sequence A323670.

\begin{rem}
The above examples suggest a general procedure that can be applied to quantize any integer sequence.
Given a sequence $(a_n)_{n\in\N}$, and take the sequence of rationals $\frac{a_{n+1}}{a_n}$
and $q$-rationals $\left[\frac{a_{n+1}}{a_n}\right]_q$.
The polynomials arising in this procedure can be considered as a $q$-analogue of the sequence~$(a_n)$.
Many surprises await to be discovered on this road!
\end{rem}

\section{The ``weighted'' Farey graph}

We describe a simple inductive way to calculate $q$-rationals.
It is very similar to the process of calculating $q$-binomials from the weighted Pascal triangle,
with the only difference that the Pascal triangle is replaced by
the {\it Farey graph}, already considered in Lecture~5.

All rationals, completed by~$\infty$ represented by~$\frac{1}{0}$, are vertices of the Farey graph.
The set of edges is constructed as follows.
Two rationals,
$\frac{n}{m}$ and~$\frac{r}{s}$ are connected by an edge if and only if $|ns-mt|=1$.

\begin{center}
\includegraphics[width=0.7\textwidth]
{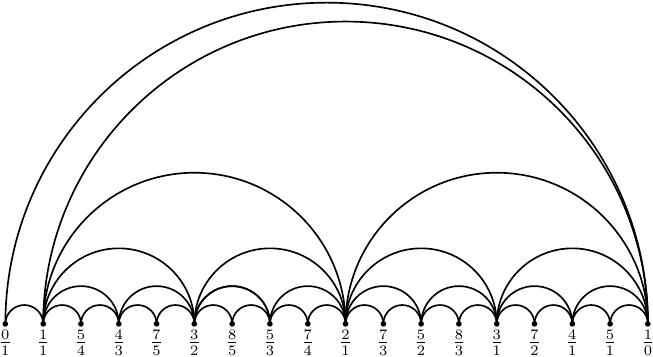}

A part of the Farey graph.
\end{center}
Every rational number belongs to infinitely many ``elementary triangles'' of the form
\begin{center}
\includegraphics[width=0.2\textwidth]
{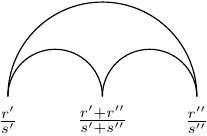}
\end{center}
The rational~$\frac{r'+r''}{s'+s''}$ can be considered as ``descendent'' of $\frac{r'}{s'}$ and~$\frac{r''}{s''}$,
it is called the mediant, or the Farey sum of $\frac{r'}{s'}$ and~$\frac{r''}{s''}$.
Starting from the ``initial triangle'' $\frac{0}{1},\frac{1}{1},\frac{1}{0}$,
and calculating the consecutive mediants, we can obtain every positive rational number.

It turns out that the $q$-rationals can be calculated inductively using the $q$-deformed triangles
\begin{center}
\includegraphics[width=0.2\textwidth]
{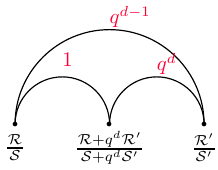}
\end{center}
the $q$-deformed mediant of two rational functions, $\frac{\cN(q)}{\cM(q)}$ and~$\frac{\cR(q)}{\Sc(q)}$ being given by
\begin{equation}
\label{WFS}
\frac{\cN(q)}{\cM(q)}\oplus\frac{\cR(q)}{\Sc(q)}=
\frac{\cN(q)+q^d\cR(q)}{\cM(q)+q^d\Sc(q)},
\end{equation}
where $d$ is calculated inductively, staring with
the initial triangle
\begin{center}
\includegraphics[width=0.2\textwidth]
{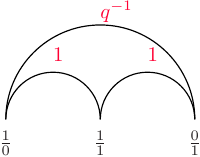}  
\end{center}
that remains undeformed.
Observe that~\eqref{WFS} is very similar to the weighted Pascal identity~\eqref{WPI}.

Here is a part of the $q$-deformed Farey graph.
\begin{center}
\includegraphics[width=0.7\textwidth]
{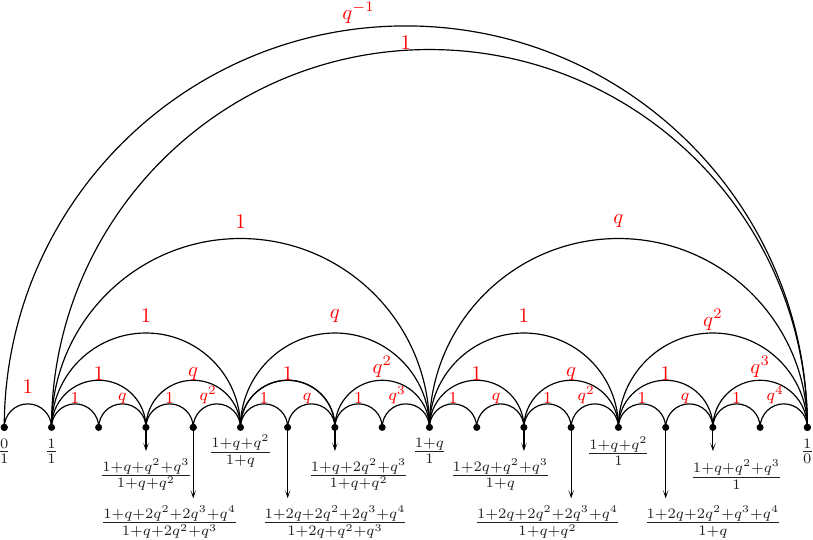}
\end{center}

\begin{thm}
\label{FereyThm}
(i)
The rational functions obtained from the $q$-deformed Farey graph are precisely
the $q$-rationals from Section~\ref{ItroSec}.

(ii)
Every $q$-rational can be calculated this way.
\end{thm}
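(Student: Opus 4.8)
The plan is to show that the weighted mediant \eqref{WFS} is the exact geometric counterpart of the $q$-deformed action \eqref{AcTq}. Once this is established, part (i) follows by induction over the Farey graph together with the uniqueness statement of Theorem~\ref{ExThm}, and part (ii) follows from the classical fact that every positive rational is a Farey descendant of the initial triangle, the remaining rationals being reached by the integer translations generated by $T$.

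First I would set up a matrix dictionary between edges of the Farey graph and the $q$-deformed action. To an edge joining $\frac{r'}{s'}$ and $\frac{r''}{s''}$, whose Farey-assigned values are the rational functions $\frac{\cN'}{\cM'}$ and $\frac{\cN''}{\cM''}$, I attach the $2\times2$ matrix $M=\left(\begin{smallmatrix}\cN''&\cN'\\\cM''&\cM'\end{smallmatrix}\right)$. With this convention the weighted Farey sum \eqref{WFS} is precisely the column vector $M\left(\begin{smallmatrix}q^{d}\\1\end{smallmatrix}\right)$ representing the new vertex $\frac{r'+r''}{s'+s''}$, so a single mediant step is literally one matrix operation. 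The classical adjacency condition $|r's''-r''s'|=1$ turns, in the $q$-setting, into an identity $\det M=\pm q^{N}$ for an explicit integer $N$; carrying this determinant as an invariant through the induction is what keeps the denominators under control and makes the output an honest element of $\Z(q)$ rather than a general rational function.

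Next comes the induction itself. The base case is the initial triangle $\frac{0}{1},\frac{1}{1},\frac{1}{0}$: its mediant has weight $d=0$ (it ``remains undeformed''), and the three values $0$, $1$, $\frac{1}{0}$ are exactly the fixed points listed in Exercise~\ref{LEx}, hence agree with Section~\ref{ItroSec}. For the inductive step I assume that the two endpoints of an edge already carry their correct $q$-rationals and compute the mediant via the matrix dictionary. Passing from a parent edge to a child edge in the Farey tree is realized by right multiplication by a $q$-deformed elementary matrix, a word in the generators $T_q,S_q$ of \eqref{AcTq}, and reading off the new column shows that the power $q^{d}$ appearing in \eqref{WFS} is exactly the power of $q$ this matrix carries. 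Therefore the Farey value of the mediant equals the value obtained by applying the corresponding word in $T_q,S_q$ to the parent data; by the $\PSL_2(\Z)$-equivariance and the uniqueness in Theorem~\ref{ExThm}, this is forced to coincide with the $q$-rational of Section~\ref{ItroSec}. This proves (i), and part (ii) is then immediate, since every vertex of the graph is produced by iterated mediants.

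The main obstacle is the bookkeeping of the weight $d$, and here lies the only real work. One must check that the single inductive rule defining $d$ on the Farey graph reproduces, along every branch simultaneously, the exponents of the convergent recurrence \eqref{Recur}. Concretely, traversing one continued-fraction coefficient $c$ amounts to taking $c$ consecutive mediants with weights $q^{0},q^{1},\dots,q^{c-1}$, and the plus signs in \eqref{WFS} must assemble these into the $q$-integer $[c]_q=1+q+\cdots+q^{c-1}$ of \eqref{qBn}, while the transition to the next coefficient must reproduce the factor $q^{c-1}$ and the minus sign of \eqref{Recur} (the latter encoded by the $S_q$-twist and the orientation of the edge). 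Verifying that this matching holds uniformly, with no drift in the exponent as the tree deepens, is the crux; once it is secured, the equivalence of the two constructions is purely formal.
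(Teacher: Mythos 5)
Your proposal is correct and follows essentially the same route as the paper, which defers the details to~\cite{MGOfmsigma}: the argument there likewise rests on translating a walk through the Farey graph into a continued-fraction expansion, checking that the weights $q^{d}$ of the deformed mediants assemble into the recurrence~\eqref{Recur} (equivalently the formulas~\eqref{qa} and~\eqref{qc}), and deducing part~(ii) from the classical fact that every rational is an iterated mediant of the initial triangle. You correctly isolate the one genuinely nontrivial verification --- that the inductive rule for the exponent $d$ reproduces, with no drift, the powers $q^{c_i-1}$ and the $q$-integers $[c]_q$ appearing in~\eqref{Recur} --- which is exactly the computation carried out in the cited reference.
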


It is proved in~\cite{MGOfmsigma} that the obtained rational functions are those
given by~\eqref{qa} and~\eqref{qc}.
The proof is based on the connection between the Farey graph and continued fractions.
The fact that every $q$-rational appears in the $q$-deformed Farey graph
is a corollary of the well-known statement that every rational can be calculated from the initial triangle
in the classical Farey graph.

\section{Properties of $q$-rationals}

We outline the basic properties of $q$-deformed rational numbers.
More information can be found in the fast growing literature.
We emphasize close resemblance to the properties of $q$-binomials.

\subsection{Total positivity}
Does the quantization preserve order?
Suppose we have two rationals~$\frac{n}{m}>\frac{n'}{m'}$, 
can we say that $\left[\frac{n}{m}\right]_q>\left[\frac{n'}{m'}\right]_q$ in some sense?
An affirmative answer is given by the next result.

\begin{thm}
\label{PosPropBis}
Given two rationals, $\frac{n}{m}>\frac{n'}{m'}$, consider their $q$-analogues
$\left[\frac{n}{m}\right]_q=\frac{\cN(q)}{\cM(q)}$ and $\left[\frac{n'}{m'}\right]_q=\frac{\cN'(q)}{\cM'(q)}$.
The polynomial 
$$
\mathcal{X}_{\frac{n}{m},\frac{n'}{m'}}:=
\cN(q)\cM'(q)-\cM(q)\cN'(q)
$$
has positive integer coefficients.
\end{thm}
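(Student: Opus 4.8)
The plan is to read $\mathcal{X}_{x,y}$ as an antisymmetric bilinear ``$q$-determinant'' of the vectors of $q$-numerators and $q$-denominators, and to propagate positivity through the inductive construction of the $q$-deformed Farey graph (Theorem~\ref{FereyThm}). Write $v(x)=(\cN(q),\cM(q))$ for the $q$-numerator/denominator vector of a rational $x$ and set $\langle x,y\rangle:=\cN_x\cM_y-\cM_x\cN_y$, so that $\mathcal{X}_{x,y}=\langle x,y\rangle$. Two facts are used repeatedly: the pairing $\langle\,\cdot\,,\,\cdot\,\rangle$ is bilinear and antisymmetric, and $\langle x,y\rangle|_{q=1}=nm'-mn'$ has the sign of $x-y$ (positive exactly when $x>y$, denominators being positive). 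The weighted mediant~\eqref{WFS} reads $v(x'\oplus x'')=v(x')+q^{d}v(x'')$ with $d\ge 0$, so bilinearity gives the two splitting relations
\begin{equation*}
\langle x,\,x'\oplus x''\rangle=\langle x,x'\rangle+q^{d}\langle x,x''\rangle,
\qquad
\langle x'\oplus x'',\,y\rangle=\langle x',y\rangle+q^{d}\langle x'',y\rangle .
\end{equation*}

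First I would settle the base case of Farey neighbours, i.e.\ pairs with $nm'-mn'=1$. Applying the relations above to a parent edge and its mediant gives $\langle x',\,x'\oplus x''\rangle=q^{d}\langle x',x''\rangle$ and $\langle x'\oplus x'',\,x''\rangle=\langle x',x''\rangle$; hence if a parent edge has monomial determinant $q^{j}$, so do the two edges it produces. Since every neighbouring pair is generated from the undeformed initial triangle $\tfrac01,\tfrac11,\tfrac10$, an induction along the construction of Theorem~\ref{FereyThm} shows $\langle a,c\rangle=q^{k}$ for some $k\ge 0$ whenever $a>c$ are neighbours (the sign is $+$ because the value at $q=1$ is $+1$). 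In particular the base case has positive coefficients.

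For the general case I would induct on the positive integer $N:=\langle a,c\rangle|_{q=1}=nm'-mn'$, the neighbour case $N=1$ being done. For $N\ge 2$ let $a_-<a<a_+$ and $c_-<c<c_+$ be the Stern--Brocot parents of $a$ and $c$, so $a=a_+\oplus a_-$, $c=c_+\oplus c_-$, and each parent is a Farey neighbour of its child. If $a_->c$, split the first argument, $\mathcal{X}_{a,c}=\mathcal{X}_{a_+,c}+q^{d}\mathcal{X}_{a_-,c}$ (up to interchanging which parent carries $q^{d}$); if instead $c_+<a$, split the second, $\mathcal{X}_{a,c}=\mathcal{X}_{a,c_+}+q^{d}\mathcal{X}_{a,c_-}$. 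In either case both resulting pairs again have the larger rational first, each has value $\ge 1$ at $q=1$, and since these values sum to $N$ each is $\le N-1$; so the inductive hypothesis applies, and since $d\ge0$ positivity is preserved under the non-negative combination.

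The point needing care—and the main obstacle—is to verify that at least one splitting is always available for $N\ge2$. I would argue by contradiction: if both fail then $a_-<c<a<c_+$ (the equalities $a_-=c$ or $c_+=a$ would make $a,c$ neighbours, i.e.\ $N=1$). But $a_-<c<a$ places $c$ strictly inside the Farey interval bounded by the neighbours $a_-$ and $a$, forcing $\mathrm{den}(c)>\mathrm{den}(a)$; symmetrically $c<a<c_+$ forces $\mathrm{den}(a)>\mathrm{den}(c)$, a contradiction. Hence some splitting applies, the induction closes, and every coefficient of $\mathcal{X}_{a,c}$ is a non-negative integer, indeed positive since $\mathcal{X}_{a,c}|_{q=1}=N\ge1$. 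The only inputs beyond bilinearity are standard Farey facts (parents are neighbours; an interior point of an edge has strictly larger denominator) and the non-negativity $d\ge0$ of the exponents in~\eqref{WFS}, both coming from the construction recalled around~\eqref{Recur} and Theorem~\ref{FereyThm}.
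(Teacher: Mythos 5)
Your argument is correct, and it takes a genuinely different route from the one the paper sketches. The paper (following \cite{MGOfmsigma}) first isolates the particular case obtained by substituting $\frac{0}{1}$ or $\frac{1}{m}$ for one of the two rationals --- namely, that $\cN(q)$ and $\cM(q)$ themselves have positive coefficients --- and then reduces the general two-rational statement to that lemma. You instead run a single strong induction on $N=nm'-mn'$: the base case $N=1$ is the Farey-neighbour case, where propagating your antisymmetric pairing through the weighted mediant \eqref{WFS} shows that the determinant attached to every edge of the $q$-deformed Farey graph is a monomial $q^{k}$ (which, as a bonus, explains the corollary stated right after Theorem~\ref{PosPropBis}); the inductive step splits one of the two rationals into its Stern--Brocot parents and uses bilinearity, with the denominator comparison correctly guaranteeing that at least one splitting keeps both resulting pairs ordered with the larger rational first and with strictly smaller determinant at $q=1$. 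What your approach buys is a self-contained, uniform argument that never needs the positivity of $\cN$ and $\cM$ as a separate input (it falls out as the special cases $c=\frac{0}{1}$ and $a=\frac{1}{0}$); what the paper's reduction buys is that the one-variable positivity statement, the form most often quoted and used, is isolated as the key lemma. Two caveats, both shared with the paper's own un-detailed treatment: the statement really concerns rationals in $[0,\infty]$, where the $q$-deformed Farey graph of Theorem~\ref{FereyThm} is defined and where $\cN,\cM$ can be normalized to have positive coefficients; and you rely on Theorem~\ref{FereyThm} for the facts that the exponents $d$ in \eqref{WFS} are non-negative and that the weighted mediants reproduce the $q$-rationals.
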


This statement has a {\it topological nature}
since every ordered set is equipped with a natural topology.
It will be crucial for extension of the theory of $q$-numbers to irrationals.

This property was understood in~\cite{MGOfmsigma} as an instance of the ``total positivity'' property,
which is a fundamental mathematical concept.
Substituting $\frac{1}{m}$ or $\frac{0}{1}$ for one of the numbers $\frac{n}{m}$ or $\frac{n'}{m'}$,
Theorem~\ref{PosPropBis} implies that for every rational number
$\frac{n}{m}$ the corresponding polynomials $\cN(q)$ and $\cM(q)$
have positive integer coefficients.
The proof of~\cite{MGOfmsigma} 
reduces Theorem~\ref{PosPropBis} to this particular case.

An important corollary of Theorem~\ref{PosPropBis} is:
$$
\mathcal{X}_{\frac{n}{m},\frac{n'}{m'}}=q^\a,
\qquad
\hbox{if and only if}
\quad
nm'-mn'=1,
$$
for some integer~$\a\geq 0$.

\subsection{Unimodality and polindromicity}
Let us look at the shape of the polynomials arising as numerators and denominators of $q$-rationals.

Examples of $q$-Fibonacci and $q$-Pell numbers
considered in Section~\ref{FiboPell} give a hope for the unimodality property.
This turns out to be true and constitutes a property similar to that of $q$-binomial coefficients.

\begin{thm}
\label{UniTh}
For every $q$-rational $\left[\frac{n}{m}\right]_q=\frac{\cN(q)}{\cM(q)}$,
the sequences of coefficients of $N(q)$ and $M(q)$ are unimodal.
\end{thm}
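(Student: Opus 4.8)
The plan is to prove unimodality inductively along the weighted Farey graph of Theorem~\ref{FereyThm}, exploiting the mediant rule~\eqref{WFS} together with the total positivity of Theorem~\ref{PosPropBis}. First I would set up the induction so that each $q$-rational $\frac{\cN}{\cM}$ is obtained as a $q$-mediant $\frac{\cN'}{\cM'}\oplus\frac{\cN''}{\cM''}$ of two ``parents'' that are adjacent in the Farey graph, i.e. satisfy $n'm''-m'n''=1$. By the corollary to Theorem~\ref{PosPropBis}, adjacency means that the relevant cross-products reduce to a single monomial $q^\a$. Concretely, for Farey neighbors one expects relations of the form
\begin{equation}
\label{crossrel}
\cN'\cM''-\cM'\cN''=\pm q^\b,
\qquad
\cN'\cM-\cM'\cN=\pm q^{\g},
\end{equation}
and these monomial cross-products are exactly what lets one control the shapes after adding polynomials. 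The base cases are the undeformed initial triangle $\frac{0}{1},\frac{1}{1},\frac{1}{0}$, whose numerators and denominators are trivially unimodal.

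The key analytic step is a lemma asserting that a sum $\cN'+q^d\cN''$ of two palindromic unimodal polynomials is again unimodal, \emph{provided the two ``mountains'' are correctly aligned}. The alignment is precisely what the degree shift $q^d$ in~\eqref{WFS} is engineered to guarantee: because the numerator and denominator of a $q$-Fibonacci-type rational are mirror images of each other (as observed in Section~\ref{FiboPell}, $\tilde\F_n(q)=q^{n-2}\F_n(q^{-1})$), palindromicity propagates through the mediant, and $d$ is chosen so that the peaks of $\cN'$ and $q^d\cN''$ overlap rather than creating a double hump. I would therefore first establish palindromicity of $\cN$ and $\cM$ for all $q$-rationals (this should follow cleanly from~\eqref{WFS} and the symmetry of the weights $q^d$), and only then attack unimodality, since for palindromic sequences unimodality is equivalent to log-concavity-type ``no interior dip'' conditions that behave well under the shifted addition.

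The main obstacle is the unimodality-preservation lemma for $\cN'+q^d\cN''$: the sum of two unimodal sequences is \emph{not} unimodal in general, so I cannot argue term-by-term. The leverage has to come from the monomial cross-product relations~\eqref{crossrel}, which encode a very rigid combinatorial compatibility between the two parents that goes beyond mere positivity. The cleanest route is probably to package unimodality via total positivity of an associated matrix: one reformulates the coefficient sequences of $\cN$ and $\cM$ as entries controlled by products of the totally positive elementary matrices underlying the $\PSL(2,\Z)$-action~\eqref{AcTq}, and then invokes a known result that the relevant minors' positivity forces unimodality of each row. This reduces the problem to checking that the quantized Farey construction produces totally positive data, which is essentially the content of Theorem~\ref{PosPropBis} strengthened to higher minors. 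The delicate point is matching the degree shift $d$ in~\eqref{WFS} with the exact exponents $\b,\g$ appearing in the cross-products, so I would expect the bookkeeping of these exponents along the continued-fraction recurrence~\eqref{Recur} to be the technical heart of the argument.
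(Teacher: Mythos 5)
There is a genuine gap, and in fact one of your intermediate claims is false. You propose to first establish palindromicity of $\cN(q)$ and $\cM(q)$ for all $q$-rationals and then lean on it to align the ``mountains'' in the mediant sum $\cN'+q^d\cN''$. But the numerators and denominators of $q$-rationals are \emph{not} palindromic in general: already $\left[\frac{5}{2}\right]_q=\frac{1+2q+q^2+q^3}{1+q}$ has the non-symmetric coefficient sequence $1,2,1,1$ in its numerator, and the paper explicitly remarks that these polynomials fail to be palindromic (palindromicity holds instead for the trace polynomials of Theorem~\ref{PalTh}, a different family). The mirror symmetry you quote from Section~\ref{FiboPell} relates the numerator to the \emph{denominator}, not each polynomial to itself. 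With palindromicity gone, your alignment mechanism for the key lemma collapses, and what remains is exactly the statement you flag as the ``main obstacle'': that $\cN'+q^d\cN''$ stays unimodal. Since sums of unimodal polynomials are not unimodal in general, and the monomial cross-product relations of Theorem~\ref{PosPropBis} control only signs of differences (first-order minors), not the second-order ``no interior dip'' information that unimodality requires, this lemma is not a bookkeeping exercise --- it is the whole theorem, and your proposal does not prove it.

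For comparison, the route the paper points to is entirely different and does not use the Farey mediant recursion at all. One starts from the combinatorial model of Theorem~\ref{NEThmqR}: the coefficients of $\cN(q)$ count north-east lattice paths (equivalently, order ideals) in the snake graph attached to the continued fraction of $\frac{n}{m}$, so $\cN(q)$ is the rank polynomial of a \emph{fence poset}. The theorem then follows from the result of Oguz and Ravichandran~\cite{OgRa} that rank polynomials of fence posets are unimodal --- a genuinely hard combinatorial statement whose proof required new ideas and, as the paper notes, the efforts of several authors. If you want to salvage an inductive approach along~\eqref{Recur}, you would need to track strictly more than positivity of cross-products; the history of this conjecture suggests that no such elementary strengthening of Theorem~\ref{PosPropBis} is known to suffice.
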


This was conjectured in~\cite{MGOfmsigma}, but a complete proof took some time
(well, a little less than in the Sylvester case!).
After the efforts of many authors, the proof of this theorem was eventually obtained in~\cite{OgRa}.

We have seen in many examples that the polynomials $\cN(q)$ and $\cM(q)$ are not necessarily palindromic.
It turns out however, that this property holds for another notable family of polynomials.
To every $A\in\PSL(2,\Z)$ we have associated an operator $A_q$ acting on $\Z(q)$ by linear-fractional transformations;
see Section~\ref{PSLAct}.
Lifted to $\SL(2,\Z(q))$, the operator $A_q$ becomes a $2\times2$ matrix defined up to a scalar multiple.
The following result does not depend on the choice of the lift.

\begin{thm}
\label{PalTh}
For every $A\in\PSL(2,\Z)$ the trace polynomial~$\mathrm{Tr}(A_q)$ is palindromic.
\end{thm}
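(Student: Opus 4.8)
The plan is to exhibit a single fixed intertwining matrix $P\in\mathrm{GL}(2,\Z(q))$ that conjugates the generators evaluated at $q^{-1}$ into the generators evaluated at $q$, picking up one scalar factor of $q^{-1}$ per generator, and then to transport this through an arbitrary word by telescoping. Concretely, I would set
\[
P=\begin{pmatrix} 1 & q-1\\ 1-q & q\end{pmatrix},
\qquad \det P=q^{2}-q+1\neq 0,
\]
and verify by direct multiplication the two identities
\[
P\,T_{q^{-1}}\,P^{-1}=q^{-1}T_{q},
\qquad
P\,S_{q^{-1}}\,P^{-1}=q^{-1}S_{q}.
\]
These reduce to the routine $2\times2$ checks $P\,T_{q^{-1}}=q^{-1}T_{q}P$ and $P\,S_{q^{-1}}=q^{-1}S_{q}P$, and they say that conjugation by $P$ intertwines the substitution $q\mapsto q^{-1}$ with the original $\PSL(2,\Z)$-action, with equal scalars $\mu_{T}=\mu_{S}=q^{-1}$.

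Granting this, for any word $A=g_{1}\cdots g_{n}$ in $S,T$ representing an element of $\PSL(2,\Z)$ I would conjugate the corresponding $q^{-1}$-matrix and telescope:
\[
P\,A_{q^{-1}}\,P^{-1}
=\prod_{i=1}^{n}\bigl(P\,(g_{i})_{q^{-1}}\,P^{-1}\bigr)
=\prod_{i=1}^{n}q^{-1}(g_{i})_{q}
=q^{-n}A_{q}
\]
(for letters $S^{-1},T^{-1}$ the scalar is $q$ rather than $q^{-1}$, which only alters the exponent). Taking traces, and using that conjugation preserves the trace in $\Z(q)$ while the substitution $q\mapsto q^{-1}$ is applied entrywise and hence commutes with $\mathrm{Tr}$, I obtain, writing $f(q):=\mathrm{Tr}(A_{q})$,
\[
f(q^{-1})=q^{-n}f(q),
\qquad\text{equivalently}\qquad
f(q)=q^{n}f(q^{-1}).
\]
Expanding $f(q)=\sum_{i}c_{i}q^{i}$, this is precisely the palindromic symmetry $c_{i}=c_{n-i}$.

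Finally I would address independence of the lift. Since the relations $S_{q}^{2}=-qI$ and $(T_{q}S_{q})^{3}=-q^{3}I$ hold, two words for the same element of $\PSL(2,\Z)$ produce matrices $A_{q}$ differing by a monomial scalar $\pm q^{k}$, so $\mathrm{Tr}(A_{q})$ is well defined only up to such a factor; but multiplying a palindromic Laurent polynomial by $\pm q^{k}$ merely shifts its coefficient sequence and preserves the symmetry, so the conclusion is lift-independent, exactly as the statement requires. The one creative step is the discovery of $P$: everything hinges on the scalar relating $A_{q^{-1}}$ and $A_{q}$ being a \emph{pure power of $q$}, and this is guaranteed precisely by finding a single order-preserving intertwiner with $\mu_{T}=\mu_{S}$. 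I expect solving the two generator intertwining equations simultaneously to pin down $P$ to be the main obstacle; once $P$ is in hand, the remainder is formal.
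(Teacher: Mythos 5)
Your argument is correct, and it is essentially self-contained where the paper is not: the text states Theorem~\ref{PalTh} and defers the proof entirely to~\cite{LMGadv}, so there is no in-paper proof to compare against. I checked the two intertwining identities and they do hold: with $P=\bigl(\begin{smallmatrix}1&q-1\\ 1-q&q\end{smallmatrix}\bigr)$ one gets $P\,T_{q^{-1}}P^{-1}=q^{-1}T_q$ and $P\,S_{q^{-1}}P^{-1}=q^{-1}S_q$ by direct multiplication, and the telescoping then gives $\mathrm{Tr}(A_{q^{-1}})=q^{-n}\mathrm{Tr}(A_q)$ with $n$ the signed word length, which is exactly the palindromic symmetry. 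Your treatment of the lift ambiguity is also right: by Theorem~\ref{GenRel}(iii) two words for the same element differ by insertions of $S^2$ and $(TS)^3$, whose $q$-lifts are the scalars $-q$ and $-q^3$, so traces of different lifts differ by $\pm q^k$ and palindromicity is unaffected. (As a sanity check, your exponent bookkeeping is consistent with the paper's Example~\ref{Ex11}: there $B=T^3ST^2ST^2STSTS$ has $14$ letters, the canonical product has determinant $q^{14}$ and trace $q^5+q^6+2q^7+q^8+q^9$, symmetric about $q^7$.) This conjugation mechanism relating $A_{q^{-1}}$ to $A_q$ up to a power of $q$ is also the mechanism underlying the proof in the cited reference, so you have not merely reproduced a citation but supplied the missing argument.
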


This statement is proved in~\cite{LMGadv}.
Let us give here an example to illustrate this statement.

\begin{exe}
\label{Ex11}
(i) Calculate the $q$-analogue of the following matrix 
$$
B=
\begin{pmatrix}
5&2\\[2pt]
2&1
\end{pmatrix},
\qquad\qquad
B_q=
\begin{pmatrix}
q+2q^2+q^3+q^4&1+q\\[2pt]
q+q^2&1
\end{pmatrix},
$$
where $B_q$ is of course defined up to a scalar multiple.

(ii) Observe that its trace is palindromic: $\mathrm{tr}(B_q)=1+q+2q^2+q^3+q^4$.
\end{exe}

The $q$-analogue of the trace function on $\PSL(2,\Z)$ seems to be an interesting notion
that deserves a further study and can potentially have many applications.
So far it was used to define $q$-deformations 
of the Markov numbers; see~\cite{LMGadv}.

\subsection{Enumerative combinatorics of $q$-rationals}

A similarity between $q$-rationals and $q$-binomials was observed in~\cite{MGOfmsigma}.
This similarity was reinforced in other works on the subject.

{\bf A}.
A combinatorial meaning of the coefficients of the polynomials $\cN(q)$ and $\cM(q)$
in a $q$-rational $\left[\frac{n}{m}\right]_q=\frac{\cN(q)}{\cM(q)}$ was suggested in~\cite{MGOfmsigma}
and beautifully reformulated in~\cite{Ove}.
The statement is exactly the same as
that in the case of $q$-binomials, but the rectangle is replaced by the so-called ``snake graph''.

Let $x=[a_1,\ldots,a_{2\ell}]$ be the standard continued fraction expansion of a rational~$x\geq1$.
The {\it snake graph} associated with~$x$ is the collection of $a_1+\cdots+a_{2\ell}-1$ boxes in the square lattice,
which the snake visites when crawling:  $a_1-1$ steps up, $a_2$ steps to the right, $a_3$ steps up, etc., 
ending with $a_{2\ell}-1$ steps to the right.
As in the case of $q$-binomials, every north-east lattice path
 is the boundary of a Young diagram.
But this time one only takes the paths with vertices in the snake graph.

\begin{thm}
\label{NEThmqR}
Given a rational $\frac{n}{m}\geq1$ and its $q$-analogue $\left[\frac{n}{m}\right]_q=\frac{\cN(q)}{\cM(q)}$,
the coefficient of $q^m$ in~$\cN(q)$ is the number of north-east lattice paths in the snake graph
with $m$ under boxes.
\end{thm}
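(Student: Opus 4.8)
The plan is to prove the statement by induction on the structure of the snake graph, mirroring the combinatorial proof of the classical result stated earlier in Theorem~\ref{NEThm}. The snake graph for $x=[a_1,\ldots,a_{2\ell}]$ is built from the continued fraction data, and the recurrence~\eqref{Recur} (equivalently, the $q$-deformed mediant rule~\eqref{WFS} of the Farey graph) governs how the polynomial $\cN(q)$ is assembled. So I would first establish a bijective dictionary between the two sides: on the algebraic side, the polynomial $\cN(q)$ and its recurrence; on the combinatorial side, the generating function
\begin{equation}
\label{Pgf}
\Pc_x(q):=\sum_{P}q^{\mathrm{area}(P)},
\end{equation}
where the sum is over all north-east lattice paths $P$ in the snake graph of $x$ and $\mathrm{area}(P)$ counts the boxes lying under $P$. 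The goal is to show $\cN(q)=\Pc_x(q)$.

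First I would pin down precisely how appending coefficients $a_i$ to the continued fraction extends the snake graph, and then show that the path-counting generating function $\Pc_x(q)$ satisfies the very same recurrence~\eqref{Recur} as $\cN_i(q)$. The key combinatorial observation is that a north-east path in an extended snake graph decomposes according to where it enters the newly attached strip of boxes; summing $q^{\mathrm{area}}$ over the possible entry points produces exactly the factors $[c_{i+1}]_q$ and the correction term $-q^{c_i-1}$ (or, in the $[a_1,\ldots,a_{2\ell}]$ normalization, the analogous weights extracted from~\eqref{qa}). This is the step where the weighting by powers of $q$ in the $q$-deformed mediant~\eqref{WFS} must be matched box-for-box against the area statistic; I expect the bookkeeping of the exponents to be the most delicate part, since the snake alternates between vertical and horizontal runs and the area contributions shift accordingly.

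With matching recurrences in hand, it remains only to check the base case: for the shortest snake graph (a single run, corresponding to an integer or to $\frac{n}{1}$), the generating function $\Pc_x(q)$ reduces to a $q$-integer $[n]_q$ of~\eqref{qBn}, which agrees with $\cN(q)$ there. Then induction closes the argument, giving $\cN(q)=\Pc_x(q)$ and hence the claimed combinatorial meaning of each coefficient.

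The hard part will be setting up the snake-graph recurrence cleanly and verifying that its $q$-weights coincide with those in~\eqref{Recur} without sign or exponent errors; the alternating structure of the continued fraction $[a_1,\ldots,a_{2\ell}]$ and the appearance of $q^{-1}$ in~\eqref{qa} make the translation between the regular and Hirzebruch expansions a genuine source of friction. A cleaner route, which I would pursue in parallel, is to work entirely in the Hirzebruch expansion $\llbracket c_0,\ldots,c_\ell\rrbracket$ where the recurrence~\eqref{Recur} has uniform signs, reformulate the snake graph directly in those coefficients, and only at the end translate back to the $[a_1,\ldots,a_{2\ell}]$ description used in the statement. This avoids carrying $q^{-1}$ through the area computation and isolates the combinatorial content in a single transparent decomposition of paths over the last box-strip.
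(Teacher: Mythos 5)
The paper itself does not prove Theorem~\ref{NEThmqR}; it defers entirely to~\cite{MGOfmsigma} and~\cite{Ove}, so there is no in-text argument to compare yours against line by line. Your plan --- introduce the area generating function $\Pc_x(q)$ over north-east paths in the snake graph, show it satisfies the same continued-fraction recurrence as $\cN(q)$, and check the base case --- is exactly the standard route taken in the cited works, and the overall architecture is sound.

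There is, however, one concrete point where your plan as written would fail. You propose, as the ``cleaner route,'' to work in the Hirzebruch expansion so as to use the recurrence~\eqref{Recur} with its ``uniform signs.'' But~\eqref{Recur} contains a genuine subtraction, $\cN_{i+1}=[c_{i+1}]_q\cN_i - q^{c_i-1}\cN_{i-1}$, and a generating function $\sum_P q^{\mathrm{area}(P)}$ over a set of paths is a sum of monomials with nonnegative coefficients: no disjoint decomposition of paths over the last box-strip can produce a minus sign. To use the Hirzebruch recurrence you would need an extra idea --- an inclusion--exclusion showing that $[c_{i+1}]_q\cN_i$ overcounts by exactly $q^{c_i-1}\cN_{i-1}$, or a sign-reversing involution --- which is precisely the opposite of the ``single transparent decomposition'' you advertise. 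The positive, bijection-friendly recurrence is the one attached to the regular expansion $[a_1,\ldots,a_{2\ell}]$ (the one underlying~\eqref{qa} and the $q$-mediant~\eqref{WFS}, whose weights are $1$ and $q^d$ with a plus sign, exactly as in the weighted Pascal identity behind Theorem~\ref{NEThm}); the snake graph is in any case defined from the $a_i$, so that is where the path decomposition by entry point into the final run actually matches the algebra. In short: keep the structure of your argument, but run the induction on the regular expansion and treat the Hirzebruch form as a consequence, not as the engine.
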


The denominator $\cM(q)$ has a similar interpretation with a smaller snake graph.

\begin{ex}
Consider again $\frac{5}{2}=[2,2]$ and $\frac{5}{3}=[1,1,1,1]$,
the snake graphs of $\frac{5}{2}$ and $\frac{5}{3}$ are as follows
$$
\xymatrix @!0 @R=0.6cm @C=0.7cm
{
\bullet\ar@{-}[rr]\ar@{-}[dd]&\bullet\ar@{-}[dd]
&\bullet\ar@{-}[d]\\
\bullet\ar@{-}[rr]&\bullet&\bullet
\\
\bullet\ar@{-}[r]
&\bullet&
}
\qquad\qquad
\xymatrix @!0 @R=0.6cm @C=0.7cm
{
&\bullet\ar@{-}[dd]\ar@{-}[r]
&\bullet\ar@{-}[dd]\\
\bullet\ar@{-}[rr]\ar@{-}[d]&\bullet&\bullet
\\
\bullet\ar@{-}[rr]
&\bullet&\bullet
}
$$
Counting north-east lattice paths in these snake graphs,
one gets the polynomials from Exercice~\ref{LEx},~(ii).
For instance, there are exactly two paths in the snake graph of~$\frac{5}{2}$ with one under box:
$$
\xymatrix @!0 @R=0.6cm @C=0.7cm
{
\bullet\ar@{-}[r]\ar@{-}[dd]&{\color{red}\bullet}\ar@{<=}[d]\ar@{=>}[r]
&{\color{red}\bullet}\ar@{-}[d]\\
\bullet\ar@{-}[rr]&{\color{red}\bullet}\ar@{<=}[d]&\bullet
\\
{\color{red}\bullet}\ar@{=>}[r]
&{\color{red}\bullet}&
}
\qquad\qquad
\xymatrix @!0 @R=0.6cm @C=0.7cm
{
\bullet\ar@{-}[rr]\ar@{-}[d]&\bullet\ar@{-}[dd]
&{\color{red}\bullet}\ar@{<=}[d]\\
{\color{red}\bullet}\ar@{=>}[r]\ar@{<=}[d]&{\color{red}\bullet}\ar@{=>}[r]&{\color{red}\bullet}
\\
{\color{red}\bullet}\ar@{-}[r]
&\bullet&
}
$$
and this corresponds to the coefficient~$2$ in the numerator of~$\left[\frac{5}{2}\right]_{q}$.
\end{ex}

{\bf B}.
The following result belongs to discrete geometry.
It is also very similar to that of the $q$-binomials.

\begin{thm}
\label{OveThm}
Let $q$ be a power of a prime integer, then
$\cN(q)$ is the number of points in the collection of Schubert cells in the Grassmannian 
$\mathrm{Gr}_{r,s}(\mathbb{F}_q)$
with
$
r=a_1+a_3+\cdots+a_{2\ell-1}$
and
$s=a_1+a_2+\cdots+a_{2\ell},
$
that correspond to all of the Young diagrams that fit in the snake graph.
\end{thm}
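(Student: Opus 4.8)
The plan is to combine the purely combinatorial description of $\cN(q)$ supplied by Theorem~\ref{NEThmqR} with the classical cellular decomposition of the Grassmannian, in the same spirit as the interpretation of the Gaussian binomial $\binom{s}{r}_q$ as a count of subspaces of $\mathbb{F}_q^s$ recalled earlier. Recall that $\mathrm{Gr}_{r,s}(\mathbb{F}_q)$ decomposes into Schubert cells $\Omega_\lambda$ indexed by the Young diagrams $\lambda$ contained in the $r\times(s-r)$ rectangle, each cell being an affine space over $\mathbb{F}_q$ of dimension $|\lambda|$ and hence carrying exactly $q^{|\lambda|}$ points; summing over all $\lambda$ recovers $\sum_\lambda q^{|\lambda|}=\binom{s}{r}_q$. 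The statement to be proved is the ``localized'' version of this identity in which one retains only the cells indexed by the diagrams fitting inside the snake graph.

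First I would fix the geometry. The snake crawls $a_1-1$ steps up, $a_2$ steps right, $a_3$ steps up, \dots, ending with $a_{2\ell}-1$ steps right; its vertical steps add up to $(a_1-1)+a_3+\cdots+a_{2\ell-1}=r-1$ and its horizontal steps to $a_2+\cdots+(a_{2\ell}-1)=(s-r)-1$, so the snake occupies $r$ rows and $s-r$ columns of boxes. It therefore sits inside the $r\times(s-r)$ rectangle, which is exactly the ambient rectangle of $\mathrm{Gr}_{r,s}$, and this pins down the values $r=a_1+a_3+\cdots+a_{2\ell-1}$ and $s=a_1+\cdots+a_{2\ell}$ of the statement. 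A Young diagram ``fitting in the snake graph'' is then by definition a partition $\lambda$ whose diagram is contained in the snake region, and every such $\lambda$ is in particular contained in the rectangle, so it indexes a genuine Schubert cell $\Omega_\lambda\subseteq\mathrm{Gr}_{r,s}$.

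The heart of the proof is to match the two enumerative statistics. By Theorem~\ref{NEThmqR} the coefficient of $q^k$ in $\cN(q)$ counts the north--east paths in the snake with exactly $k$ boxes under the path, and such a path is precisely the boundary of a Young diagram fitting in the snake, the number of under-boxes being the area statistic that I must arrange to coincide with the cell dimension $\dim\Omega_\lambda=|\lambda|$. I would therefore set up the explicit bijection between north--east paths in the snake and the sub-collection of partitions fitting in the snake, and check that it is weight preserving, i.e.\ carries the under-box count to $|\lambda|$ (fixing, once and for all, the orientation of the snake inside the rectangle so that ``under'' corresponds to the correct side of the Schubert stratification; this is the only delicate point, and is where a reflection or complementation may enter). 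Granting this, additivity of point counts over a cell decomposition gives
\[
\#\Big(\bigsqcup_{\lambda\ \mathrm{fits}}\Omega_\lambda(\mathbb{F}_q)\Big)=\sum_{\lambda\ \mathrm{fits}}q^{|\lambda|}=\sum_{k}\#\{\text{paths with }k\text{ under-boxes}\}\,q^{k}=\cN(q),
\]
which is the desired equality.

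The main obstacle is exactly the weight-preserving bijection of the previous paragraph: one has to control the zigzag (bend) structure of the snake graph, prove that the diagrams fitting in it form a down-closed family of partitions in the rectangle, and verify that the under-box area statistic of Theorem~\ref{NEThmqR} is transported to the Schubert dimension rather than to its mirror. The positivity of the coefficients of $\cN(q)$ from Theorem~\ref{PosPropBis} is reassuring here, since a negative coefficient would immediately preclude any such cell count; but the genuine content is the combinatorial identification itself, which is the insight of~\cite{Ove}.
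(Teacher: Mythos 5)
Your proposal is correct and follows essentially the same route as the paper, which states only that Theorem~\ref{OveThm} ``was deduced in~\cite{Ove} from Theorem~\ref{NEThmqR}'': you combine the under-box count of Theorem~\ref{NEThmqR} with the standard Schubert cell decomposition of $\mathrm{Gr}_{r,s}(\mathbb{F}_q)$, where the cell indexed by $\lambda$ contributes $q^{|\lambda|}$ points, and you correctly verify that the snake occupies an $r\times(s-r)$ bounding rectangle so that the relevant diagrams index genuine cells. The one point you flag as delicate --- that the under-box statistic transports to $|\lambda|$ rather than its complement --- is indeed the only real content beyond Theorem~\ref{NEThmqR}, and is exactly what \cite{Ove} supplies.
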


It was deduced in~\cite{Ove} from Theorem~\ref{NEThmqR}.

{\bf C}.
The question of connection of $q$-rationals to quantum calculus and quantum algebra remains wide open.
First steps have been made in~\cite{Tho} where an infinitesimal version of the $\PSL(2,\Z)$-action was considered.

\section{$q$-analogues of irrational numbers}

The most important property of $q$-rationals is explained in this section.
Suppose we have a sequence of rational numbers $x_n$ converging to some {\it irrational}~$x\in\R$,
is there any way to say that the sequence of rational functions~$\left[x_n\right]_q$ converges?
A positive answer was suggested in~\cite{MGOexp}:
the sequence $q$-rationals~$\left[x_n\right]_q$ converges in the topology of formal power series.
This allows one to introduce the concept of $q$-irrational $[x]_q$
which is a formal power series in~$q$ with integer coefficients.

End of the story? Definitely not!
It turns out that, for every irrational~$x$,  the power series
$[x]_q$ is actually an analytic function in~$q$ (viewed as a complex parameter, $q\in\C$)
in some domain around~$0$.
Conjectured in~\cite{LMGOV}, this was recently proved in~\cite{Eti}.

The subject is an active area of research that contain many open problems.

\subsection{Stabilization phenomenon}

Recall that the space of power series 
$X(q)=\sum_{k\geq0}X_{k}\,q^k$
denoted by $R[[q]]$, where $R$ is an arbitrary ring (in our case $R=\Z$),
has a standard and very simple {\it topology}.
A sequence of power series 
$$
X_n(q)=\sum_{k\geq0}X_{n,k}\,q^k
$$
is said to {\it converge} to some power series $X(q)$ if the coefficients~$X_{n,k}$ 
stabilize, as~$n$ grows.

\begin{rem}
This formal convergence of sequences of power series should not be confounded
with the question of convergence of power series in complex variable!
\end{rem}

Let $x>1$ be an irrational number, and let $(x_n)_{n\in\N}$
be a sequence of rationals converging to~$x$.
We also assume that $x_n\geq1$ which implies that 
the rational functions $\left[x_n\right]_q$ have no singularity at~$q=0$.
Consider the Taylor expansions of $\left[x_n\right]_q$ at~$q=0$,
that by abusing of notation, will be dented by the same symbol
$
\left[x_n\right]_q=\sum_{k\geq0}\varkappa_{n,k}\,q^k.
$
So far, we deal with these series as formal power series in~$q$.

\begin{thm}
\label{StabThm}
For every irrational~$x>1$ and any rational approximation $x_n\to{}x$, the Taylor series 
of the functions~$\left[x_n\right]_q$ converges to some power series from~$\Z[[q]]$,
which depends only on~$x$.
\end{thm}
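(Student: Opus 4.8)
The plan is to work in the formal topology on $\Z[[q]]$, where convergence of a sequence means that the valuation (order of vanishing at $q=0$) of successive differences tends to infinity. I would first prove the statement for the canonical approximating sequence, namely the convergents $x_i=\llbracket c_0,\dots,c_i\rrbracket$ of the Hirzebruch continued fraction of $x$, and only afterwards remove the dependence on the chosen sequence. Since $x>1$, all Hirzebruch coefficients satisfy $c_j\ge 2$; feeding this into the recurrence~\eqref{Recur} and evaluating at $q=0$ (where $[c_j]_q=1$ and $q^{c_j-1}=0$) shows $\cM_i(0)=1$ for every $i$, so each denominator $\cM_i$ is invertible in $\Z[[q]]$ and the functions $[x_i]_q$ genuinely lie in $\Z[[q]]$.

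The heart of the argument is a valuation estimate for consecutive convergents. Writing $D_{i+1}:=\cN_{i+1}\cM_i-\cN_i\cM_{i+1}$ and substituting~\eqref{Recur}, the terms involving $[c_{i+1}]_q$ cancel and one obtains the telescoping identity $D_{i+1}=q^{c_i-1}D_i$, with $D_1=-q^{c_0-1}$. Hence
\[
D_{i+1}=-q^{(c_0-1)+(c_1-1)+\cdots+(c_i-1)},
\]
a pure power of $q$ --- exactly as predicted by the corollary to Theorem~\ref{PosPropBis}, since consecutive convergents are Farey neighbors. Because $\cM_i\cM_{i+1}$ is invertible with constant term $1$, this gives
\[
[x_{i+1}]_q-[x_i]_q=\frac{D_{i+1}}{\cM_i\cM_{i+1}}\in q^{\delta_i}\,\Z[[q]],\qquad \delta_i:=\textstyle\sum_{j=0}^{i}(c_j-1).
\]
Each $c_j-1\ge 1$, so $\delta_i\ge i+1\to\infty$; the sequence $([x_i]_q)_i$ is therefore Cauchy in $\Z[[q]]$ and converges to a limit $[x]_q\in\Z[[q]]$.

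It remains to show the limit does not depend on the approximating sequence. Here I would use the weighted Farey graph (Theorem~\ref{FereyThm}) together with total positivity. The convergents cut out a nested family of Farey intervals $I_1\supset I_2\supset\cdots$ around $x$, the interval $I_k$ having as endpoints two consecutive convergents whose cross-polynomial is $q^{\delta_k}$. Any rational $y\in I_k$ is obtained from those endpoints by iterated weighted mediants~\eqref{WFS}, and each such mediant introduces the accumulated factor $q^{\delta_k}$; consequently $[y]_q$ agrees with the $q$-rational of either endpoint modulo $q^{\delta_k}$. Since an arbitrary sequence $x_n\to x$ (with $x_n\ge 1$) eventually enters every $I_k$, any two of its far-out terms agree modulo $q^{\delta_k}$, so $[x_n]_q$ is Cauchy with the same limit $[x]_q$ as the convergents.

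The routine parts --- the cancellation in~\eqref{Recur}, the evaluation $\cM_i(0)=1$, and the divergence $\delta_i\to\infty$ --- are immediate. The genuine obstacle is the independence step: one must control the $q$-valuation of $[y]_q-[\text{endpoint}]_q$ uniformly over all rationals $y$ in a deep Farey interval. This is where the monomial form of the Farey-neighbor cross-polynomial (the corollary to Theorem~\ref{PosPropBis}) and the weighted-mediant description~\eqref{WFS} do the real work, ensuring that every rational inside $I_k$ inherits the full accumulated weight $\delta_k$.
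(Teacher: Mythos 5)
Your first half is correct and complete: the telescoping identity $D_{i+1}=q^{c_i-1}D_i$ with $D_1=-q^{c_0-1}$, the invertibility of each $\cM_i$ in $\Z[[q]]$ (constant term $1$ because every $c_j\ge 2$ when $x>1$), and the divergence of $\delta_i$ do show that the Hirzebruch convergents form a Cauchy sequence in $\Z[[q]]$. This is also in the spirit of the paper, which only indicates that the proof from~\cite{MGOexp} rests on the total positivity Theorem~\ref{PosPropBis}; your use of the monomial cross-polynomial of Farey neighbours is exactly that ingredient.

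The gap is in the independence step. The convergents of the Hirzebruch (minus-sign) expansion decrease \emph{monotonically} to $x$, so $x$ never lies between two consecutive convergents, and the ``nested family of Farey intervals $I_k$ around $x$ with consecutive convergents as endpoints'' does not exist. As written, an arbitrary sequence $x_n\to x$ need never enter the intervals you construct, and the sandwich collapses. The repair is to produce genuine two-sided Farey approximations: the convergents $p_k/q_k$ of the \emph{regular} expansion alternate around $x$ and are Farey neighbours, and by the corollary to Theorem~\ref{PosPropBis} their cross-polynomial is a monomial $q^{\alpha_k}$; one must then check (e.g.\ from formula~\eqref{qa}) that $\alpha_k\to\infty$, which does not follow from your Hirzebruch computation. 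Once two-sided Farey-neighbour endpoints $a_k<x<b_k$ with $\mathcal{X}_{b_k,a_k}=q^{\alpha_k}$ are in hand, your mediant induction works; alternatively, the identity $\mathcal{X}_{b,a}\,\cM_y=\mathcal{X}_{b,y}\,\cM_a+\mathcal{X}_{y,a}\,\cM_b$ for rationals $a<y<b$, combined with the positivity of all three cross-polynomials and $\cM_a(0)=\cM_b(0)=1$, gives the valuation bound for every $y$ in the interval in one line, without invoking the weighted Farey graph.
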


\noindent
The result of this ``stabilization process'' is called the $q$-analogue of~$x$ and is denoted by~$[x]_q$.
Theorem~\ref{StabThm} extends to irrationals $x<1$ using the operator $T_q^{-1}$; see~\eqref{AcTq},
but in this case $[x]_q$ becomes a Laurent series in~$q$ (containing terms with~$q^{-1}$).

Theorem~\ref{StabThm} was first obtained by computer experimentation and looked as a miracle.
A simple proof was then obtained in~\cite{MGOexp} based on the total positivity Theorem~\ref{PosPropBis}.
This may come as a surprise at first sight, but both statements are topological:
the topology of ordered set on~$\Q$ vs the topology of power series.

The most simple way to calculate the $q$-analogue of an irrational $x\in\R\setminus\Q$ is to consider
the infinite continued fraction expansion
$x=[a_0,a_1,a_2,a_3,\ldots]$.
The explicit formula for the series $[x]_{q}$ is then
\begin{equation}
\label{QuantIrrat}
[x]_{q}\quad=\quad
[a_0]_{q} + \cfrac{q^{a_{0}}}{[a_1]_{q^{-1}} 
          + \cfrac{q^{-a_{1}}}{[a_{2}]_{q} 
          +\cfrac{q^{a_{2}}}{[a_{3}]_{q^{-1}}
          + \cfrac{q^{-a_{3}}}{
    \ddots}
          } }} 
\end{equation}
which is nothing but the infinite version of~\eqref{qa}.
The stabilization phenomenon guarantees that cutting $n$ terms of
this infinite continued fraction, at least $n$ terms of its Taylor series will be stabilized.

\subsection{Quadratic irrationals: gold, silver, bronze, and other precious metals}\label{GoldSec}
To illustrate Theorem~\ref{StabThm} and get simple examples,
let us consider a remarkable series of quadratic irrationals with 1-periodic continued fractions
$$
y_k=\frac{k+\sqrt{k^2+4}}{2}=[k,k,k,\ldots],
$$ 
for $k\in\N$.
The first examples are the golden ratio
$\varphi=\frac{1+\sqrt{5}}{2}$ (see Lecture~1), the silver ratio $\sqrt{2}+1$, 
followed by other irrational numbers of this series, often called the ``metallic numbers''.

The sequence of Fibonacci quotients $\frac{F_{n+1}}{F_n}$ considered in Section~\ref{FiboPell}
converges to~$\varphi$.
The corresponding Taylor series are
$$
\begin{array}{rcl}
\left[\frac{8}{5}\right]_q&=&
1 + q^2 - q^3 + 2 q^4 - 4 q^5 + 7 q^6 - 12 q^7 + 21 q^8 - 37q^9+ 65q^{10} - 114q^{11}+ 200q^{12}\cdots\\[6pt]
\left[\frac{13}{8}\right]_q&=&
1 + q^2 - q^3 + 2 q^4 - 3 q^5 + 3 q^6 - 3 q^7 + 4 q^8 - 5q^9+ 5q^{10} - 5q^{11} + 6q^{12}\cdots\\[6pt]
\left[\frac{21}{13}\right]_q&=&
1 + q^2 - q^3 + 2 q^4 - 4 q^5 + 8 q^6 - 17 q^7 + 36 q^8 - 75q^9+ 156q^{10} - 325q^{11} + 677q^{12}\cdots\\[6pt]
\cdots&\cdots&\cdots
\end{array}
$$
More and more terms remain unchanged as $n$ grows.
The series eventually stabilize as $n$ grows and the result of this stabilization is
\begin{equation}
\label{GRSer}
\begin{array}{rcl}
\left[\varphi\right]_q&:=&
1 + q^2 - q^3 + 2 q^4 - 4 q^5 + 8 q^6 - 17 q^7 + 37 q^8 - 82 q^9 + 185 q^{10} \\[6pt]
&&- 423 q^{11} + 978 q^{12}-2283q^{13}+ 5373q^{14}-12735q^{15}+30372q^{16} \cdots
\end{array}
\end{equation}
Quite remarkable, the sequence of coefficients of this series coincides (up to an alternating sign and the first term)
with the sequence A004148 of OEIS called the ``generalized Catalan numbers''.

Similarly, the quotients of Pell's numbers $\frac{P_{n+1}}{P_n}$ converges to $\sqrt{2}+1=[2,2,2,\ldots]$
which allows us to calculate the series $\left[\sqrt{2}+1\right]_q$.

\begin{exe}
\label{GoldEx}
(i)
Use the periodic continued fraction \eqref{QuantIrrat} to calculate the generating functions:
$$
\begin{array}{rcl}
\left[\varphi\right]_q&=&
\displaystyle
\frac{q^2+q-1+\sqrt{(q^2 + 3q + 1)(q^2 - q + 1)}}{2q},
\\[12pt]
\left[\sqrt{2}+1\right]_q&=&
\displaystyle
\frac{q^3+2q-1+\sqrt{(q^4 + q^3 + 4q^2 + q + 1)(q^2 - q + 1)}}{2q}.
\end{array}
$$

(ii) (see~\cite{OP}).
The general expression for the metallic $q$-numbers is
$$
\left[y_k\right]_q=
\frac{1}{2q}\left(q[k]_q+(q^k+1)(q-1)+\sqrt{\left(q[k]_q+(q^k+1)(q-1)\right)^2+4q}\right).
$$
\end{exe}

A general theory of $q$-deformed quadratic irrationals was developed in~\cite{LMGadv}.
It was proved that if $x$ is a quadratic irrational, i.e. a root of a quadratic equation
$
ax^2+bx+c=0
$
with $a,b,c\in\Z$, then $\left[x\right]_q$
satisfis an equation of the form
$$
A(q)\left[x\right]_q^2+B(q)\left[x\right]_q+C(q)=0,
$$
where $A(q),B(q),C(q)$ are some polynomials in~$q$.
A more precise statement is the following.

\begin{thm}
\label{QuadThm}
For every quadratic irrational $x$, the $q$-analogue $\left[x\right]_q$ can be written in the form
$$
\left[x\right]_q=\frac{\Pc(q)+\sqrt{\Qc(q)}}{\cR(q)},
$$
where $\Pc,\Qc,\cR$ are polynomials in~$q$.
Moreover, the polynomial $\Qc$ is palindromic.
\end{thm}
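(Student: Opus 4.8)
The plan is to realise $x$ as a fixed point of a hyperbolic element of the modular group and to transport that fixed-point equation through the quantisation map. Since $x$ is a quadratic irrational, its continued fraction expansion is eventually periodic, which is equivalent to the existence of a hyperbolic element $A\in\PSL(2,\Z)$ with $A(x)=x$. As $x$ is a finite irrational point, $A$ does not fix $\infty$, so writing $A=\begin{pmatrix} a & b\\ c & d\end{pmatrix}$ we have $c\neq0$, and hyperbolicity guarantees two distinct real fixed points.

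First I would upgrade the equivariance of the quantisation map from $\Q\cup\{\infty\}$ to the irrational $x$. Choosing rational approximations $x_n\to x$ (for instance the convergents), Theorem~\ref{ExThm} gives $[A(x_n)]_q=A_q([x_n]_q)$, while Theorem~\ref{StabThm} gives $[x_n]_q\to[x]_q$ and, since $A(x_n)\to A(x)=x$, also $[A(x_n)]_q\to[x]_q$ in $\Z[[q]]$. Because the fractional-linear operator $A_q$ is continuous in the power-series topology at $[x]_q$ (its denominator is a unit in $\Z[[q]]$, as $[x]_q$ has a nonzero constant term for $x>1$), passing to the limit yields the fixed-point equation $A_q([x]_q)=[x]_q$. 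For $x<1$ one conjugates by a power of $T_q$ first, working with Laurent series as in the remark after Theorem~\ref{StabThm}.

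Writing $A_q=\begin{pmatrix}\alpha & \beta\\ \gamma & \delta\end{pmatrix}$ with $\alpha,\beta,\gamma,\delta\in\Z[q]$ (the polynomial lift given by the corresponding product of the generators $T_q,S_q$), the relation $[x]_q=\frac{\alpha[x]_q+\beta}{\gamma[x]_q+\delta}$ becomes the quadratic equation $\gamma\,[x]_q^2+(\delta-\alpha)[x]_q-\beta=0$, with $\gamma\neq0$ and nonzero discriminant by hyperbolicity. Solving it exhibits
\begin{equation*}
[x]_q=\frac{(\alpha-\delta)+\sqrt{(\alpha-\delta)^2+4\beta\gamma}}{2\gamma}=\frac{\Pc(q)+\sqrt{\Qc(q)}}{\cR(q)},
\end{equation*}
with $\Pc=\alpha-\delta$, $\cR=2\gamma$, and crucially $\Qc=(\alpha-\delta)^2+4\beta\gamma=\operatorname{tr}(A_q)^2-4\det(A_q)$, the sign of the root being fixed by matching the stabilised series. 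Since each generator $T_q,S_q$ has determinant $q$, the chosen lift satisfies $\det(A_q)=q^N$, so $\Qc=\operatorname{tr}(A_q)^2-4q^N$.

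It remains to prove that $\Qc$ is palindromic, and here I would invoke Theorem~\ref{PalTh}: $\operatorname{tr}(A_q)$ is palindromic, hence so is $\operatorname{tr}(A_q)^2$. The symmetry $q\leftrightarrow q^{-1}$ underlying that palindromicity inverts the determinant, $\det A_{q^{-1}}=q^{-N}$, so the natural homogeneity of the palindrome is $\operatorname{tr}(A_{q^{-1}})=q^{-N}\operatorname{tr}(A_q)$; consequently $\operatorname{tr}(A_q)^2$ satisfies $\operatorname{tr}(A_q)^2=q^{2N}\bigl(\operatorname{tr}(A_{q^{-1}})\bigr)^2$, i.e. it is a palindrome of degree $2N$ centred at $q^N$. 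The monomial $4q^N$ is likewise a palindrome of degree $2N$, and palindromes of a fixed degree are closed under subtraction, so $\Qc=\operatorname{tr}(A_q)^2-4q^N$ is palindromic. The main obstacle is exactly this centering: one must confirm that the palindromic symmetry of Theorem~\ref{PalTh} is the homogeneous one with weight $N=\deg\det(A_q)$ rather than palindromicity with respect to a smaller degree, equivalently that $\operatorname{tr}(A_q)$ has a nonzero constant term so that no symmetric cancellation shifts its centre. Positivity of the entries of $A_q$ (Theorem~\ref{PosPropBis}) is what I would use to rule out these degenerate cases.
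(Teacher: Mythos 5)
Your proposal follows essentially the same route as the paper's own (sketched) argument: realise the quadratic irrational as a fixed point of a hyperbolic element $A\in\PSL(2,\Z)$, transport the fixed-point equation through the quantisation map to get $A_q([x]_q)=[x]_q$, solve the resulting quadratic so that $\Qc=\mathrm{tr}(A_q)^2-4\det(A_q)$, and deduce palindromicity from Theorem~\ref{PalTh}. You correctly identify, and handle plausibly, the one delicate point the paper glosses over (that the palindromic centre of $\mathrm{tr}(A_q)^2$ agrees with that of $\det(A_q)=q^N$), so no further changes are needed.
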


We could observe this palindromicity in the examples of metallic numbers and it turns out to be a universal property.
The proof follows from the fact that every quadratic irrational is a fixed point of some element of~$\PSL(2,\Z)$,
and the polynomial~$\Qc(q)$ is closely related to its trace.

\subsection{Cubic irrationals,
the quantum regular heptagon and nonagon}\label{CubSec}
Suppose now that $x$ is a cubic irrational, that is a solution to
an irreducible cubic equation
\begin{equation}
\label{Cubic}
ax^3+bx^2+cx+d=0
\end{equation}
with $a,b,c,d\in\Z$.
Does $\left[x\right]_q$ satisfy any $q$-analogue of~\eqref{Cubic} with polynomial in~$q$ coefficients?
The answer to this question is unknown, but it was conjectured in~\cite{OU} that this is never true.
The conjecture is based on computer experimentation.
The reader may remember (see Lecture~4) explicit formulas for solutions of a cubic equation.
However, there is no hope for any $q$-version of such formula in the cubic case.

It turns out however, that some ``miracle'' happens for special cubic equationss.
Let us consider two remarkable examples:
\begin{equation}
\label{HeptaEq}
x^3 + x^2 -2x - 1 = 0,
\qquad\hbox{and}\qquad
x^3 -3x + 1 = 0,
\end{equation}
related to the regular heptagon and nonagon, respectively.
Their trigonometric solutions are known since several centuries:
$$
\begin{array}{rcl}
x_1=2\cos\left(\frac{2\pi}{7 }\right),
&
x_2=2\cos\left(\frac{4\pi}{7 }\right),
&
x_3=2\cos\left(\frac{8\pi}{7 }\right);\\[4pt]
x_1=2\cos\left(\frac{2\pi}{9}\right),
&
x_2=2\cos\left(\frac{4\pi}{9}\right),
&
x_3=2\cos\left(\frac{8\pi}{9}\right),
\end{array}
$$ 
for the first and the second equation~\eqref{HeptaEq}, respectively.

\begin{thm}
\label{CubThm}
The $q$-deformations of the roots of~\eqref{HeptaEq} satisfy the equation
\begin{equation}
\label{CubicQuant}
\begin{array}{rcl}
X^3 - b(q)X^2 - \left(q^{-1}b(q)+3q^{-2}\right)X - q^{-3} &=& 0,\\[4pt]
X^3 - \tilde{b}(q)X^2 + \left(\tilde{b}(q)-3\right)X + 1 &= &0,
\end{array}
\end{equation}
respectively.
The coefficients~$b(q)$ and~$\tilde{b}(q)$  in~\eqref{CubicQuant}
are a certain Laurent series in~$q$.
\end{thm}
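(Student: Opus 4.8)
The plan is to reduce (\ref{CubicQuant}) to a short list of identities among the three Laurent series $[x_1]_q,[x_2]_q,[x_3]_q$ and then to establish these by transporting the Galois symmetry of the classical cubics through the quantization map. Writing the monic cubic with roots $[x_1]_q,[x_2]_q,[x_3]_q$ as $X^3-e_1X^2+e_2X-e_3$, where $e_1,e_2,e_3$ are the elementary symmetric functions, a direct comparison with (\ref{CubicQuant}) shows the statement is equivalent to four identities: for the heptagon, $e_3=q^{-3}$ and $e_2=-q^{-1}e_1-3q^{-2}$ (with $b(q):=e_1$); for the nonagon, $e_3=-1$ and $e_2=e_1-3$ (with $\tilde b(q):=e_1$). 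I would first record the consistency check at $q=1$: the stabilization Theorem~\ref{StabThm} together with formula (\ref{QuantIrrat}) gives $[x_j]_1=x_j$, so the four identities specialize to the classical Vieta relations of (\ref{HeptaEq}), forcing $b(1)=-1$ and $\tilde b(1)=0$.

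The conceptual engine is that in both cases the three roots form a single orbit of the order-three cyclic Galois group of the real cyclotomic field, generated by the Chebyshev doubling map $\sigma(x)=x^2-2$, which sends $2\cos\theta\mapsto 2\cos 2\theta$; here $2^3\equiv 1\pmod 7$ and $2^3\equiv -1\pmod 9$ produce the two $3$-cycles $x_1\to x_2\to x_3\to x_1$. The crux, and the step I expect to be the main obstacle, is that this symmetry is genuinely \emph{non-M\"obius}: since $x_2=x_1^2-2$ is quadratic and not affine in $x_1$ over $\Q$, no element of $\PSL(2,\Z)$ carries $x_1$ to $x_2$, so the $\PSL(2,\Z)$-equivariance of $[\,\cdot\,]_q$ (Theorem~\ref{ExThm}) does not by itself relate $[x_1]_q$ to $[x_2]_q$. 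The heart of the proof must therefore supply a substitute for equivariance that realizes the doubling at the level of $q$-series.

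To build that substitute I would work from the explicit continued-fraction expansion (\ref{QuantIrrat}) together with the convergent recurrence (\ref{Recur}). For each root I compute the numerator and denominator series $\cN_i(q),\cM_i(q)$ of the convergents and, using the total-positivity and stabilization machinery (Theorems~\ref{PosPropBis} and~\ref{StabThm}), control their $q=0$ valuations; summing the three leading orders must reproduce the valuation of $e_3$, and I would then verify the stronger fact that the full products collapse to the monomials $q^{-3}$ and $-1$. The extra negative powers in the heptagon product reflect that two of its roots are $<1$ and are quantized through $T_q^{-1}$, while the rigid $q$-independence of the nonagon product is a cancellation that must be proven rather than merely observed. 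The remaining affine relations $e_2=-q^{-1}e_1-3q^{-2}$ and $e_2=e_1-3$ I would obtain by exhibiting a $3\times3$ matrix $C_q$ over the ring of Laurent series whose characteristic polynomial is exactly (\ref{CubicQuant}) and whose three eigen-series agree, to all orders, with $[x_1]_q,[x_2]_q,[x_3]_q$ as forced by the three-term behaviour of (\ref{Recur}).

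The genuine difficulty is precisely this all-orders matching: because the continued fractions of $2\cos(2\pi/7)$ and $2\cos(2\pi/9)$ are aperiodic (the numbers being cubic), there is no periodicity to close the induction, so one cannot simply iterate a finite functional equation as in the quadratic case of Theorem~\ref{QuadThm}. I expect the resolution to use the self-consistency of the orbit: the tails produced by shifting (\ref{QuantIrrat}) are interchanged by $\sigma$, and although $\sigma$ is not M\"obius, I would argue that its action on the \emph{pair} (numerator series, denominator series) is realized by a fixed Laurent transformation preserving the symmetric functions. Proving that this transformation is well defined and orbit-independent, and that it forces the two affine relations, is the technical core; here I would lean on the quantum-trace methods of~\cite{LMGadv} and on the computations of~\cite{OU} to certify the finitely many coefficients that pin down $b(q)$ and $\tilde b(q)$.
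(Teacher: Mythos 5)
Your reduction to the four symmetric-function identities (and the $q=1$ consistency check) is correct and matches the paper's reformulation \eqref{Vieta1}--\eqref{Vieta2}. But the step you single out as the main obstacle --- your claim that the Galois symmetry is ``genuinely non-M\"obius,'' so that no element of $\PSL(2,\Z)$ carries $x_1$ to $x_2$ and equivariance cannot be used --- is false, and this error derails the rest of the argument. The whole point of these two special cubics is that the cyclic permutation of their roots \emph{is} the restriction of an integral M\"obius map: $x\mapsto-\frac{x+1}{x}=T^{-1}S$ permutes the roots of $x^3+x^2-2x-1=0$, and $x\mapsto\frac{x-1}{x}=TS$ permutes those of $x^3-3x+1=0$ (these are the order-three elements of $\PSL(2,\Z)$; check numerically that $-\frac{x+1}{x}$ sends $2\cos(2\pi/7)\mapsto 2\cos(8\pi/7)\mapsto 2\cos(4\pi/7)$). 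The fact that the abstract Galois generator is the doubling map $x\mapsto x^2-2$ does not preclude this: a M\"obius transformation is determined by its values at three points, and for these equations the transformation induced by the permutation happens to lie in $\PSL(2,\Z)$ --- that is exactly the ``miracle'' the theorem exploits, and why the statement fails for general cubics.

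Once this is in hand the proof is a few lines: equivariance of $[\,\cdot\,]_q$ gives, for the nonagon, $X_2=-1/(X_1-1)$ cyclically, whence $X_1X_2X_3=-1$ and $\left(X_1-1\right)\left(X_2-1\right)\left(X_3-1\right)=\left(X_1X_2X_3\right)^2=1$, which expand to $e_3=-1$ and $e_2=e_1-3$; the heptagon case is handled the same way with $T_q^{-1}S_q$, whose matrix does acquire $q$-dependence and produces the powers of $q^{-1}$ in \eqref{Vieta1}. Your proposed substitute --- controlling $q$-adic valuations via total positivity, exhibiting a $3\times3$ matrix with characteristic polynomial \eqref{CubicQuant}, and a ``fixed Laurent transformation'' realizing the doubling map on numerator/denominator pairs --- is not carried out, and its key step (that such a transformation exists, is orbit-independent, and preserves the symmetric functions) is essentially the conclusion restated rather than proved. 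As written the proposal has a genuine gap.
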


The proof of Theorem~\ref{CubThm} is entirely based on the $\PSL(2,\Z)$-invariance
and we will give some details.
But let us first reformulate Theorem~\ref{CubThm} using Vieta's formulas.
Recall that if $x_1,x_2,x_3$ are roots of an equation $x^3+bx^2+cx+d=0$, then
$$
\begin{array}{rcl}
b & = & -x_1-x_2-x_3,\\
c & = & x_1x_2+x_1x_3+x_2x_3,\\
d & = & -x_1x_2x_3;
\end{array}
$$
see again Lecture~4.

In our case, let $X_i=\left[x_i\right]_q$, where $i=1,2,3$, be the $q$-deformations of the roots.
No $q$-analogue of any of the three Vieta's formulas for $X_1,X_2,X_3$ holds for general cubic equation.
But, for our two equations~\eqref{HeptaEq}, we have
\begin{equation}
\label{Vieta1}
\begin{array}{rcl}
X_1X_2X_3&=&q^{-3},\\[4pt]
X_1X_2+X_2X_3+X_3X_1&=&
q^{-1}\left(X_1+X_2+X_3\right)-3q^{-2}.
\end{array}
\end{equation}
and
\begin{equation}
\label{Vieta2}
\begin{array}{rcl}
X_1X_2X_3&=&-1,\\[4pt]
X_1X_2+X_2X_3+X_3X_1&=&X_1+X_2+X_3-3.
\end{array}
\end{equation}
respectively.
Remember that $X_1,X_2,X_3$ are infinite series in $q$, the above equalities are result of spectacular cancellation of terms.

\begin{exe}
Show that \eqref{Vieta1} and~\eqref{Vieta1} is equivalent to the respective equation~\eqref{CubicQuant}.
\end{exe}

Let us go back to the proof of Theorem~\ref{CubThm}.

\begin{exe}
Check that equations~\eqref{HeptaEq} are invariant under the linear-fractional transformations
$$
x\mapsto-\frac{x+1}{x}
\qquad\hbox{and}\qquad
x\mapsto\frac{x-1}{x},
$$
respectively.
\end{exe}

These transformations relate the three roots of~\eqref{HeptaEq} to each other.
For instance, for the heptagon equation we have
$$
x_1=-\frac{x_2+1}{x_2},
\qquad
x_2=-\frac{x_3+1}{x_3},
\qquad
x_3=-\frac{x_1+1}{x_1}.
$$
In more sophisticated terms, the Galois group of the equations~\eqref{HeptaEq}, which is the cyclic group of order~3,
admits an embedding into~$\PSL(2,\Z)$.

The $q$-deformation then goes along the following lines.
The above linear-fractional transformations
correspond to the action of the elements of~$\PSL(2,\Z)$
$$
T^{-1}S=
\begin{pmatrix}
1&1\\[4pt]
-1&0
\end{pmatrix},
\qquad\hbox{and}\qquad
TS=
\begin{pmatrix}
1&-1\\[4pt]
1&0
\end{pmatrix}
$$
which are its only (up to conjugation) third-order elements.
After the $q$-deformation defined by~\eqref{AcTq}, these elements become
$$
T_q^{-1}S_q=
\begin{pmatrix}
q&1\\[4pt]
-q^2&0
\end{pmatrix}.
\qquad\hbox{and}\qquad
T_qS_q=
\begin{pmatrix}
1&-1\\[4pt]
1&0
\end{pmatrix}
$$
Note that the element $T_qS_q\in\PSL(2,\Z)$ does not depend on~$q$.
It will appear again in Section~\ref{CataS}.

Let us deduces~\eqref{Vieta1} and~\eqref{Vieta2} from the invariance of the $q$-deformed equations under the action
of these elements.
(For simplicity, we consider only the second case, the first is similar.)
By $T_qS_q$-invariance, we have
$$
X_2(q)=-\frac{1}{X_1(q)-1}.
$$
Therefore,
$$
X_1(q)X_2(q)X_3(q)=-X_1(q)\frac{1}{X_1(q)-1}\frac{X_1(q)-1}{X_1(q)}=-1,
$$
hence the first formula~\eqref{Vieta2}.
Consider now the product $\left(X_1(q)-1\right)\left(X_2(q)-1\right)\left(X_3(q)-1\right)$.
Applying
$$
X_1(q)=\frac{X_2(q)-1}{X_2(q)},
\qquad
X_2(q)=\frac{X_3(q)-1}{X_3(q)},
\qquad
X_3(q)=\frac{X_1(q)-1}{X_1(q)},,
$$ 
we have
$$
\left(X_1(q)-1\right)\left(X_2(q)-1\right)\left(X_3(q)-1\right)=
\left(X_1(q)X_2(q)X_3(q)\right)^2=1,
$$
which readily gives the second formula~\eqref{Vieta2}.

This completes the proof of Theorem~\ref{CubThm}.

\begin{rem}
One can say that ``two out of three'' Vieta's formulas, \eqref{Vieta1} and~\eqref{Vieta2} 
are still satisfied for the $q$-deformed roots of~\eqref{HeptaEq}, but not all three of them!
The coefficients $b(q)$ and~$\tilde{b}(q)$ in~\eqref{CubicQuant} can be calculated as the sum of the $q$-deformed roots: 
$$
b(q),\tilde{b}(q) = X_1+X_2+X_3.
$$
Properties of these coefficients are unknown.
It would be interesting to understand the nature of these $q$-series and calculate their expressions 
in terms of hypergeometric series.

Let us also mention that equations~\eqref{HeptaEq} are included into two families for which Theorem~\ref{CubThm} still holds,
but we do not dwell of the details heres.
\end{rem}

\subsection{The radius of convergence: a strong conjecture and a weaker result}
So far, the $q$-analogue $\left[x\right]_q$ of an irrational $x\in\R$ is understood as a formal series in~$q$.
Let us think of~$q$ as a complex variable,
it is a very natural question whether this series has some radius of convergence~$R(x)$, so that $\left[x\right]_q$
is a  holomorphic function in~$q$ in some disc (or more generally some domain) of~$\C$.

Consider first the example of the golden ratio $\varphi=\frac{1+\sqrt{5}}{2}$.
The radius of convergence $R(\varphi)$ of the series~\eqref{GRSer} representing
the $q$-deformation $\left[\varphi\right]_q$ 
can be calculated explicitly.
The generating function of~$\left[\varphi\right]_q$ was calculated in Example~\ref{GoldEx},
it follows that $R(\varphi)$ is equal to the modulus of the minimal root of
the polynomial $q^2 + 3q + 1$ (under the radical).

\begin{exe}
Check that the smallest root of $q^2 + 3q + 1$ has modulus $\frac{3-\sqrt{5}}{2}$, and therefore
$$
R(\varphi)=\frac{3-\sqrt{5}}{2}=0.381966...
$$
\end{exe}

The following conjecture was formulated in~\cite{LMGOV}.

\begin{conj}
\label{MainConj}
 For every real $x>0$ the radius of convergence $R(x)$ of the series~$\left[x\right]_q$
satisfies the inequality
$
R(x)\geq R(\varphi)
$
and the equality holding only for $x$ which are $\PSL(2,\Z)$-equivalent to~$\varphi$.
\end{conj}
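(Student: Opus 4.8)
The plan is to recast Conjecture~\ref{MainConj} as a sharp statement about the exponential growth rate of the Taylor coefficients of $[x]_q$ and then to attack it through the transfer matrix that encodes the continued fraction. Write $[x]_q=\sum_{k\ge0}\varkappa_k q^k$; by Hadamard's formula $R(x)^{-1}=\limsup_k|\varkappa_k|^{1/k}$, and since $R(\varphi)=\frac{3-\sqrt5}{2}$ one checks $R(\varphi)^{-1}=\varphi^2$. Thus the conjecture is equivalent to the assertion that $\limsup_k|\varkappa_k|^{1/k}\le\varphi^2$ for every $x>0$, with equality exactly on the $\PSL(2,\Z)$-orbit of $\varphi$. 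Two preliminary reductions are in order. First, using that $[x]_q$ is genuinely analytic near $0$ (so $R(x)>0$; see~\cite{Eti}) and that $[Ax]_q$ is obtained from $[x]_q$ by the $q$-M\"obius transformation $A_q$ of~\eqref{AcTq}, I would prove that $R$ is a class function for the $\PSL(2,\Z)$-action: the map $A_q$ is a ratio of polynomials in $q$, regular and invertible on a neighborhood of $0$, so it moves neither the origin nor the nearest singularity. This both explains why the orbit of $\varphi$ is the natural equality locus and lets me replace $x$ by any representative of its class. Second, writing $x=[a_0,a_1,a_2,\dots]$ and using~\eqref{QuantIrrat}, I encode the convergents by the cocycle $\Pi_n(q)=M_0(q)\cdots M_n(q)$, where each $M_i(q)\in\SL(2,\Z(q))$ is the lift of $T_q^{a_i}S_q$ carrying the alternating $q\leftrightarrow q^{-1}$ twist, so that $[x]_q$ is the M\"obius limit of $\Pi_n(q)$ applied to a base point. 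In this language $R(x)$ is the modulus of the $q$ nearest to $0$ at which the iteration of $\Pi_n$ ceases to contract, i.e. where the cocycle loses hyperbolicity.

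I would settle the stationary case first, namely $x$ a quadratic irrational with purely periodic continued fraction, since there $\Pi$ is powered by a single period matrix $M(q)$ and Theorem~\ref{QuadThm} gives $[x]_q=\frac{\Pc(q)+\sqrt{\Qc(q)}}{\cR(q)}$ with $\Qc$ palindromic. Here $[x]_q$ is the attracting fixed point of the M\"obius map $M(q)$, analytic precisely while the two eigenvalues of $M(q)$ are distinct, so $R(x)$ is the smallest-modulus root of the discriminant $\mathrm{Tr}(M_q)^2-4\det M_q$, which up to squares is the palindromic polynomial $\Qc$. For $\varphi$ this discriminant factors as $(q^2+3q+1)(q^2-q+1)$, whose root of least modulus is $\frac{3-\sqrt5}{2}$ (the factor $q^2-q+1$ contributing only roots of modulus $1$). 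The stationary case then becomes the concrete extremal problem: among all period matrices, the golden matrix minimizes the least-modulus root of the discriminant. I expect this from a \emph{monotonicity} principle — increasing a partial quotient $a_i$, or lengthening the period, makes $M(q)$ more strongly diagonally dominant and keeps $\mathrm{Tr}(M_q)^2-4\det M_q$ from vanishing over a larger range, pushing the eigenvalue collision to larger $|q|$ and hence enlarging $R$. In the periodic setting this is a finite comparison controlled by the positivity of the coefficients.

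For arbitrary $x>0$ I would prove the lower bound $R(x)\ge R(\varphi)$ directly, by showing that \emph{the golden generator is the weakest contraction}. Concretely, on the disk $|q|<R(\varphi)$ I would exhibit a common invariant domain on which every generator $M_i(q)$ acts as a strict contraction of an appropriate projective metric, with the contraction factor of the all-ones generator (partial quotient $1$) the largest among all $a_i$ and tending to $1$ exactly as $|q|\to R(\varphi)$, where that generator becomes parabolic. Increasing a partial quotient only strengthens the contraction. Granting this, any infinite composition $\Pi_n(q)$ converges uniformly on $|q|<R(\varphi)$, so $[x]_q$ extends analytically to that disk and $R(x)\ge R(\varphi)$. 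The total positivity Theorem~\ref{PosPropBis} is the natural source of the invariant domain: positivity of the polynomial entries yields a projective cone preserved by all generators, at least along the real segment where the comparison is cleanest. The rigidity statement would then follow by tracking when the contraction is asymptotically neutral, forcing the continued fraction tail to be all ones, i.e. $x$ in the orbit of $\varphi$.

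The hard part, and the reason the statement remains conjectural, is the non-stationary regime over \emph{complex} $q$. For $x$ of bounded type the cocycle is uniformly hyperbolic and the argument above should give $R(x)\ge R(\varphi)$ together with strictness whenever a partial quotient $\ge2$ occurs infinitely often; this is presumably the accessible ``weaker result.'' But for $x$ with unbounded partial quotients (Liouville-type numbers) there is no uniform spectral gap: long strings of large $a_i$ make the transfer matrices nearly parabolic, the positivity cone has no complex analogue, and one must exclude a resonance in which the non-stationary product $M_0(q)\cdots M_n(q)$ degenerates at some $|q|<R(\varphi)$ even though the golden cocycle does not. Establishing a \emph{uniform} lower bound on the radius of hyperbolicity of arbitrary products, valid on the whole complex disk $|q|<R(\varphi)$ and insensitive to the sizes of the $a_i$, is precisely the crux. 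This is a thermodynamic-formalism question closely parallel to the characterization of $\varphi$ as the worst-approximable number in the Lagrange spectrum; making that parallel rigorous, and extracting the sharp equality analysis, is the step on which a complete proof of Conjecture~\ref{MainConj} would turn.
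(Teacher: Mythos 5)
This statement is Conjecture~\ref{MainConj}: the paper does not prove it, and neither do you. The paper records only the weaker Theorem~\ref{NewThm} (lower bound $3-2\sqrt{2}$, obtained via Rouch\'e's theorem for rational $x$ in \cite{LMGOV} and via $q$-adic analysis of the continued fraction in \cite{Eti}) together with the rational case of the sharp bound from \cite{EGMS}. Your proposal is a strategy sketch whose decisive steps are exactly the ones you concede at the end: the ``monotonicity principle'' for the least-modulus root of the discriminant in the periodic case, and, above all, uniform contraction of arbitrary non-stationary products of transfer matrices on the full complex disk $|q|<R(\varphi)$. Nothing in the paper or in your argument supplies these; the invariant-cone mechanism you extract from Theorem~\ref{PosPropBis} is a statement about positivity of coefficients, hence about real $q$, and, as you yourself note, has no complex counterpart --- pushed through honestly it should be expected to land near the non-sharp constant $3-2\sqrt{2}$ of Theorem~\ref{NewThm} rather than at $\frac{3-\sqrt{5}}{2}$.

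One concrete step that fails as written is the claim that $R$ is a class function because $A_q$ ``moves neither the origin nor the nearest singularity.'' The map $A_q$ acts by a M\"obius transformation on the \emph{value} $[x]_q$, so $[A(x)]_q=\frac{a(q)[x]_q+b(q)}{c(q)[x]_q+d(q)}$ can acquire poles at zeros of $c(q)[x]_q+d(q)$ lying strictly inside the disk of convergence of $[x]_q$, and such poles do limit the radius of the Taylor series at $0$. That poles are the genuine obstruction is visible already for rational $x$, where $R$ is governed by the roots of the denominator $\cM(q)$; controlling their location is precisely the content of the result of \cite{EGMS} and is not automatic. So even your reduction to one representative per orbit needs an argument. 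The remainder of your outline (the Hadamard reformulation $R(\varphi)^{-1}=\varphi^2$, the identification of $R$ with the eigenvalue-collision locus of the period matrix in the quadratic case, the parallel with the Lagrange spectrum) is consistent with the paper's discussion, but it remains a program, not a proof.
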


This statement, provided it is true, is a $q$-analogue of the classical Hurwitz theorem
that $\varphi$ is ``the most irrational number'' (see Lecture~1).
It was checked by a long sequence of computer experimentation and proved for some special cases.
For rational~$x$, this conjecture was proved in~\cite{EGMS}
stating the for every $q$-rational $\left[\frac{n}{m}\right]_q=\frac{\cN(q)}{\cM(q)}$
the roots of the polynomial $\cM(q)$ cannot belong to the disc with radius $R(\varphi)$.

The known result recently proved~\cite{Eti} for irrational~$x$ is twice weaker.

\begin{thm}[\cite{Eti,LMGOV}]
\label{NewThm}
For every $x>0$, the radius of convergence $R(x)$ of the series~$\left[x\right]_q$
has the following lower bound
$$
R(x) > 3-2\sqrt{2}=0.171572...
$$
\end{thm}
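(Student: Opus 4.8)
The plan is to derive the lower bound directly from the Hirzebruch continued fraction~\eqref{qc} for $[x]_q$, read as a continued fraction in the complex variable $q$, and to invoke a classical convergence criterion. First I would reduce to an irrational $x>1$: for rational $x$ the stronger bound $R(x)\ge R(\varphi)>3-2\sqrt2$ is already supplied by~\cite{EGMS}, while the generator $T_q$ acts by $X\mapsto qX+1$ (see~\eqref{AcTq}), so $[x+1]_q=q\,[x]_q+1$ has the same radius of convergence as $[x]_q$. Hence integer shifts by $T_q^{\pm1}$ bring any positive $x$ into $(1,\infty)$ without changing $R(x)$, and for such $x$ every Hirzebruch coefficient satisfies $c_j\ge 2$.

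Next I would put~\eqref{qc} into the normalized form $[c_0]_q+\cfrac{a_1}{1+\cfrac{a_2}{1+\cdots}}$ by the standard equivalence transformation (dividing the $j$-th level by $[c_j]_q$). A short computation gives $a_n=\dfrac{-q^{\,c_{n-1}-1}}{[c_{n-1}]_q\,[c_n]_q}$ for $n\ge 2$, whereas the outermost data $[c_0]_q$ and $a_1=-q^{\,c_0-1}/[c_1]_q$ merely compose one Möbius step with the tail and are irrelevant to convergence. The heart of the argument is the uniform estimate, valid for $|q|<1$ and any $c\ge 2$: since $|[c]_q|=\dfrac{|1-q^{c}|}{|1-q|}\ge\dfrac{1-|q|^{c}}{1+|q|}\ge 1-|q|$ (using $1-|q|^c\ge 1-|q|^2=(1-|q|)(1+|q|)$) and $|q|^{\,c_{n-1}-1}\le|q|$, one gets the bound $|a_n|\le\dfrac{|q|}{(1-|q|)^2}$ for all $n\ge2$, uniformly in the continued fraction digits.

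The constant now appears of its own accord: the right-hand side is $\le\frac14$ exactly when $4|q|\le(1-|q|)^2$, i.e. $|q|^2-6|q|+1\ge0$, i.e. $|q|\le 3-2\sqrt2$. By Worpitzky's theorem the tail $\cfrac{a_2}{1+\cfrac{a_3}{1+\cdots}}$ then converges uniformly on $|q|\le 3-2\sqrt2$, its value lying in the disc $|w|\le\frac12$, so the denominators $1+(\text{tail})$ are bounded away from $0$; composing with the two holomorphic outer steps (whose only denominators are the nonvanishing $[c_j]_q$) shows $[x]_q$ is holomorphic on $|q|<3-2\sqrt2$. Since the stabilization Theorem~\ref{StabThm} identifies this function with the power series $[x]_q$, we conclude $R(x)\ge 3-2\sqrt2$.

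I expect the \emph{strict} inequality to be the main obstacle. The per-term bound above is far from tight: for a constant digit $c\equiv 2$ one has $a_n\equiv -q/(1+q)^2$, and the corresponding periodic continued fraction converges for all $|q|<1$ (its discriminant $1+4a_n=(1-q)^2/(1+q)^2$ vanishes only at $q=1$). Thus the true obstruction is not the modulus of a single partial numerator but the accumulated behaviour of the worst periodic tail, conjecturally the golden one giving $R(\varphi)$. Upgrading $R(x)\ge3-2\sqrt2$ to the strict $R(x)>3-2\sqrt2$ therefore demands a sharper, non-uniform estimate sensitive to the digit pattern — for instance exploiting that the transfer matrix $\left(\begin{smallmatrix}[2]_q&-q\\1&0\end{smallmatrix}\right)$ of a block of $2$'s is neutral rather than expanding, with eigenvalues $1$ and $q$ — rather than the crude bound on $|a_n|$; this is where I anticipate the genuine analytic work of~\cite{Eti} to lie.
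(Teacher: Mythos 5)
The paper does not actually prove Theorem~\ref{NewThm}; it only records that \cite{LMGOV} handles rational $x$ via Rouch\'e's theorem and that \cite{Eti} treats irrational $x$ by analysing the continued fraction~\eqref{QuantIrrat}. Your route -- normalising the Hirzebruch expansion~\eqref{qc} to $[c_0]_q+\mathbf{K}(a_n/1)$ and invoking Worpitzky's theorem -- is therefore a genuinely independent and pleasantly self-contained argument. Your computations check out: $a_n=-q^{c_{n-1}-1}/([c_{n-1}]_q[c_n]_q)$, the estimate $|[c]_q|\ge(1-|q|^c)/(1+|q|)\ge 1-|q|$ for $c\ge2$, and the resulting bound $|a_n|\le|q|/(1-|q|)^2\le\tfrac14$ precisely on $|q|\le 3-2\sqrt{2}$ (the small root of $t^2-6t+1$). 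Worpitzky then gives uniform convergence of the tails with values in $|w|\le\tfrac12$, so the approximants $[x_n]_q$ are holomorphic on the disc and converge uniformly there; since by Theorem~\ref{StabThm} their Taylor coefficients stabilise to those of $[x]_q$, the limit's Taylor series at $0$ is $[x]_q$ and $R(x)\ge 3-2\sqrt{2}$. The reduction of general $x>0$ to $x>1$ via $[x+1]_q=q[x]_q+1$, and the appeal to \cite{EGMS} for rational $x$, are both legitimate. It is a striking bonus of your approach that the constant $3-2\sqrt{2}$ appears transparently as the Worpitzky threshold, which the paper presents as a coincidence of two unrelated methods.

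The one genuine gap is the one you name yourself: the theorem asserts the \emph{strict} inequality $R(x)>3-2\sqrt{2}$, and your argument stops at $R(x)\ge 3-2\sqrt{2}$. This is not merely cosmetic. On the circle $|q|=3-2\sqrt{2}$ your bound is attained: at $q=-(3-2\sqrt{2})$ with $c_{n-1}=c_n=2$ one gets $a_n=\tfrac14$ exactly, and pairs of consecutive $2$'s occur infinitely often in the Hirzebruch expansion of a typical irrational (every regular partial quotient $a_{2i+1}\ge 2$ produces a block of $2$'s). So Worpitzky with the uniform bound cannot be pushed to any larger disc, and a priori $[x]_q$ could have a singularity at the boundary point $q=-(3-2\sqrt{2})$. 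Closing this would require either a parabola-/value-set refinement exploiting that equality forces $q$ onto the negative real axis, or the transfer-matrix analysis you sketch; this is exactly the extra analytic input the cited proofs supply. As written, your proposal proves the theorem with $\ge$ in place of $>$, and correctly identifies where the remaining work lies.
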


This statement was proved in~\cite{LMGOV} only for rational~$x$,
the proof is based on application of Rouch\'e theorem of complex analysis.
The proof of~\cite{Eti} is different and based on the analysis of $q$-adic convergence of the continued fraction~\eqref{QuantIrrat}.
Surprisingly, two different approaches give the same bound $3-2\sqrt{2}$,
which is however far from the expected bound $\frac{3-\sqrt{5}}{2}$.

Let us give one more example of a transcendental number.
The $q$-analogue of $\pi$ is given by series that starts as follows.
$$
\begin{array}{rcl}
\left[\pi\right]_q&=&
1+q+q^2+q^{10}-q^{12}-q^{13}+q^{15}+q^{16} - q^{20}-2q^{21}- q^{22}+2q^{23}+4q^{24}+q^{25}\\[4pt]
&&-4q^{27}-4q^{28}-2q^{29}+q^{30}+5q^{31}+8q^{32}+3q^{33}-3q^{34}-10q^{35}-12q^{36}-5q^{37}\\[4pt]
&&+8q^{38}+19q^{39}+20q^{40}+2q^{41}-18q^{42}-32q^{43}-25q^{44}+31q^{46}+51q^{47}+45q^{48}-7q^{49} \cdots
\end{array}
$$
The coefficients of this series grow slowly, further computer estimations suggest that its radius of
convergence equals~$1$, so that $\left[\pi\right]_q$ is analytic in the unit circle.
However, this is also a conjecture.

\section{Surprises}

In this final part of the lecture we collect some surprising properties of $q$-numbers.
Some of them are very unexpected.

\subsection{Left $q$-rationals}\label{FlatSec}
What can we say about the stabilization phenomenon in this rational case?
Suppose that we have a sequence of rationals $(x_n)_{n\in\N}$ such that $x_n\to{}x$,
and $x$ itself is rational.
Is it true that the sequence of rational functions $\left[x_n\right]_q$ has some limit and if this is the case,
does this limit coincide with $\left[x\right]_q$?
The answer is ``yes'' when the sequence $(x_n)$ approximates~$x$ from the right 
and ``no'' if it approximates from the left.
This phenomenon was observed in~\cite{MGOexp} and developed in~\cite{BBL},
it gives rise of an alternative ``left'' version of $q$-rationals.

Recall the definition of $q$-rationals from Section~\ref{ItroSec}.
If $x\in\Q$, then there exist (infinitely many) elements $A\in\PSL(2,\Z)$ such that $x=A(0)$.
Take the $q$-deformed action~$A_q$ on rational functions (Section~\ref{PSLAct}), 
the $q$-analogue of~$x$ is then defined by
$\left[x\right]_q:=A_q(0)$.
The result is independent of the choice of $A$.
Set
\begin{equation}
\label{Flat}
\left[x\right]_q^\flat:=A_q\left(\frac{q-1}{q}\right).
\end{equation}
The definition is correct, i.e. the result of~\eqref{Flat} is independent of the choice of $A$.
This follows from the next exercise.

\begin{exe}
Check that $\left[0\right]_q=0$ and $\left[0\right]_q^\flat=\frac{q-1}{q}$ have the same stabiliser:
the infinite cyclic group generated by~$L_q$.
\end{exe}

We have the following.

\begin{thm}
\label{FlatThm}
(i) 
If a sequence of rationals $(x_n)$ approximates a rational~$x$ from the right, then
the Taylor series of~$\left[x_n\right]_q$ stabilize to that of~$\left[x\right]_q$.

(ii) 
If the sequence of rationals $(x_n)$ approximates a rational~$x$ from the left, then
the Taylor series of~$\left[x_n\right]_q$ stabilize to that of~$\left[x\right]_q^\flat$.
\end{thm}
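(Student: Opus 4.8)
The plan is to establish both stabilization statements by comparing the Taylor expansions of $\left[x_n\right]_q$ against the two candidate limits, $\left[x\right]_q$ and $\left[x\right]_q^\flat$, using the total positivity result (Theorem~\ref{PosPropBis}) as the engine, exactly as in the irrational case (Theorem~\ref{StabThm}). The key observation is that stabilization of a sequence of power series is controlled by how many leading Taylor coefficients two functions share, and Theorem~\ref{PosPropBis} converts the \emph{order} relation on $\Q$ into a \emph{valuation} statement about the difference of numerators and denominators. Concretely, if $\left[x\right]_q = \frac{\cN(q)}{\cM(q)}$ and $\left[x_n\right]_q = \frac{\cN_n(q)}{\cM_n(q)}$, then the cross-difference $\cN(q)\cM_n(q) - \cM(q)\cN_n(q)$ has positive integer coefficients (up to sign, depending on which of $x, x_n$ is larger), so its lowest-degree term has a controlled valuation; dividing by $\cM\cM_n$ (a power series with constant term a nonzero integer, since $x,x_n \geq 1$) shows that $\left[x_n\right]_q - \left[x\right]_q$ has high $q$-valuation, and this valuation grows as $x_n \to x$.

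\textbf{Step one.} I would first treat the right-approximation case (i). When $x_n \searrow x$ with all $x_n > x$, the total positivity theorem gives that $\mathcal{X}_{\frac{n_n}{m_n}, \frac{n}{m}} = \cN_n\cM - \cM_n\cN$ has nonnegative coefficients. The Farey/continued-fraction structure (Theorem~\ref{FereyThm} and formula~\eqref{qc}) shows that as $x_n \to x$ from the right, the agreement of the regular continued fraction expansions of $x_n$ and $x$ in more and more initial terms forces the $q$-valuation of this cross-difference to tend to infinity. Hence $\left[x_n\right]_q \to \left[x\right]_q$ coefficientwise, proving (i).

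\textbf{Step two.} For the left-approximation case (ii), the crucial input is the exercise stating that $0$ and $\frac{q-1}{q}$ have the \emph{same stabilizer} in the $q$-deformed action, namely the cyclic group generated by $L_q$ (the $q$-analogue of the parabolic fixing $0$ on the left). I would exploit the $\PSL(2,\Z)$-equivariance: writing $x = A(0)$ with $A \in \PSL(2,\Z)$, approximation from the left corresponds, after applying $A^{-1}$, to approximating $0$ from the appropriate side, where the natural limiting seed is not $0$ but $\frac{q-1}{q}$. The definition~\eqref{Flat} sets $\left[x\right]_q^\flat := A_q\!\left(\frac{q-1}{q}\right)$, and the independence of $A$ follows precisely from the shared-stabilizer exercise. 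I would then repeat the valuation estimate of Step one with $\frac{q-1}{q}$ in place of $0$ as the base point, concluding that the left-approximating sequence stabilizes to $\left[x\right]_q^\flat$.

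\textbf{The hard part will be} making the valuation growth quantitative and uniform: showing that the number of stabilized Taylor coefficients genuinely increases without bound as $n \to \infty$, rather than merely that consecutive differences have positive valuation. This requires tracking how the continued-fraction convergents of $x_n$ agree with those of $x$ and translating that agreement, via the recurrence~\eqref{Recur} and formula~\eqref{qc}, into a lower bound on $\val_q\!\left(\left[x_n\right]_q - \left[x\right]_q\right)$ (respectively $-\left[x\right]_q^\flat$) that diverges. The asymmetry between left and right approximation is the conceptual crux, and it is explained entirely by the fact that the two fixed points $0$ and $\frac{q-1}{q}$ of the parabolic $L_q$ are genuinely distinct rational functions sharing the same stabilizer, so approximation from opposite sides selects different limiting seeds.
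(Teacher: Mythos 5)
Your overall architecture is close to the paper's, which disposes of the theorem in one line: check the statement at the single point $x=0$ and transport it by $\PSL(2,\Z)$-equivariance. Your Step two is exactly that reduction, and your identification of the conceptual source of the asymmetry (the two distinct fixed points $0$ and $\frac{q-1}{q}$ of the parabolic stabilizer) is the right picture. Nevertheless there are two concrete gaps.

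First, the mechanism you give in Step one does not actually distinguish right from left approximation. If $x_n\to x$ from the left, the continued fraction expansions of $x_n$ also agree with (a representative of) that of $x$ in more and more initial terms, yet the valuation of the cross-difference stays bounded: for $x=1$ and $x_n=\frac{n-1}{n}$ one finds $\left[\frac{n-1}{n}\right]_q=\frac{q[n-1]_q}{[n]_q}$, and the cross-difference with $\frac{1}{1}$ is the constant $1$ for all $n$; whereas from the right, $x_n=\frac{n+1}{n}$ gives $\left[\frac{n+1}{n}\right]_q=\frac{[n+1]_q}{[n]_q}$ with cross-difference $q^{n}$. So ``agreement of expansions forces the valuation to infinity'' is precisely the side-dependent claim that must be proved, and you have deferred it to ``the hard part'' rather than proved it. Second, in Step two you propose to ``repeat the valuation estimate with $\frac{q-1}{q}$ in place of $0$,'' but Theorem~\ref{PosPropBis} compares two $q$-rationals, and $\frac{q-1}{q}=\left[0\right]_q^\flat$ is not the $q$-deformation of any rational, so the total-positivity engine does not apply to that pair. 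What is actually needed, and what the paper's reduction boils the whole theorem down to, is the direct computation at the base point: $\left[\tfrac{1}{n}\right]_q=\frac{q^{n-1}}{[n]_q}=q^{n-1}+O(q^{n})\to 0$, versus $\left[-\tfrac{1}{n}\right]_q=-\frac{1}{q[n]_q}=\frac{q-1}{q}+(q-1)q^{n-1}+\cdots\to\frac{q-1}{q}$. Once these are in hand, both parts follow by applying $A_q$, which is continuous for the power-series (Laurent) topology and preserves the side of approach since elements of $\PSL(2,\Z)$ act as locally increasing maps. With that single computation supplied your outline closes up; without it, neither part is established.
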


To prove this, it suffices to check the statement for $x=0$ and then use the $\PSL(2,\Z)$-invariance.

\begin{exe}
Calculate examples of left $q$-rationals:
$\left[1\right]_q^\flat=q,\;
\left[2\right]_q^\flat=q^2+1,\;
\left[\infty\right]_q^\flat=\frac{1}{1-q}$.
\end{exe}

We can say now that the $q$-deformed real line has the following structure.
Every irrational is represented by the unique $q$-analogue, while rationals are ``doubled''.
The map $x\mapsto\left[x\right]_q$ is continuous at irrational points and discontinuous
if $x$ is rational.
In other words, a sequence of rationals $(x_n)$ approximates a rational~$x$
neither from the left, nor from the right, then 
the sequence $\left[x_n\right]_q$ has no limit in any sense.

\begin{rem}
In order to better understand the nature of left $q$-rationals, recall that an element of $\SL(2,\Z)$ 
can be of one of three types:
\begin{enumerate}
\item
$A\in\SL(2,\Z)$ is hyperbolic if $|\mathrm{tr}(A)|\geq3$;
\item
$A\in\SL(2,\Z)$ is parabolic if $|\mathrm{tr}(A)|=2$;
\item
$A\in\SL(2,\Z)$ is elliptic if $|\mathrm{tr}(A)|=0$, or $1$.
\end{enumerate}

Every hyperbolic element has two distinct fixed points and these are quadratic irrationals.
A parabolic element $A\in\SL(2,\Z)$ has one double fixed point, left and right $q$-rationals
are two solutions to the equation $A_q(X)=X$, where $A_q$ is the $q$-deformed action of~$A$, 
and $X$ is a rational function in~$q$.

Elliptic elements have complex fixed points.
They also play some role in the picture, see Section~\ref{CataS} below.

\end{rem}

\subsection{Hidden symmetries}
The theory of $q$-numbers is determined by the modular group $\PSL(2,\Z)$,
but it turns out that the group of symmetry is actually bigger!
The biggest currently known group of symmetry is $\PGL(2,\Z)\times\Z_2$
(we use familiar to physicists notation $\Z_2$ for the cyclic group of two elements: $\Z_2=\Z/(2\Z)$).

The group $\PGL(2,\Z)$ is obtained by adding to $\PSL(2,\Z)$ one extra generator.
One can use equivalently one of the transformations:
$$
x\mapsto-x,
\qquad\hbox{or}\qquad
x\mapsto\frac{1}{x}.
$$
There exists an action of $\PGL(2,\Z)$ on the space $\Z(q)$ of rational functions that commutes with
the map $[\,.\,]:\Q\cup\{\infty\}\to\Z(q)$ defined in Section~\ref{ItroSec}.
Besides the generators $T_q$ and $S_q$ of the $\PSL(2,\Z)$-action (see~\eqref{AcTq}) we have
$$
X(q)\mapsto
\frac{-X(q)+1-q^{-1}}{\left(q-1\right)X(q)+1}
\qquad\hbox{and}\qquad
X(q)\mapsto
\frac{\left(q-1\right)X(q)+1}{qX(q)+1-q}.
$$
These maps were found in~\cite{Jou}.

Consider the following involution acting on rational functions:
$$
\mathcal{I}:
X(q)\mapsto
\frac{-X(q^{-1})+q-1}{\left(1-q\right)X(q^{-1})+q}.
$$
It commutes with the map $[\,.\,]:\Q\cup\{\infty\}\to\Z(q)$ and also with the $\PGL(2,\Z)$-action.
As proved in~\cite{Tho}, this map interchanges left and right $q$-rationals:
$$
\mathcal{I}:\left[x\right]_q\leftrightarrow\left[x\right]_q^\flat.
$$

Are there other hidden symmetries?
No-one knows...

\subsection{Hankel determinants and remarkable recurrences}\label{CataS}
Suppose you have an integer sequence $(a_n)_{n\geq1}$ and you don't know what to do with it.
A good idea can be to look at the corresponding sequence of Hankel determinants
$$
\D_0:=1,
\quad
\D_1=a_1,
\quad
\D_2=\left|
\begin{array}{rl}a_1&a_2\\
a_2&a_3
\end{array}
\right|,
\quad
\D_3=\left|
\begin{array}{rcl}a_1&a_2&a_3\\
a_2&a_3&a_4\\
a_3&a_4&a_5
\end{array}
\right|,
\ldots
$$
and see if this sequence has some interesting properties.
This has been done in~\cite{OP} for the sequence of coefficients of the $q$-deformed golden ratio~$\left[\varphi\right]_q$;
see Exercice~\ref{GoldEx}.
The corresponding sequence of Hankel determinants turns out to be $4$-antiperiodic and consist in $-1,0,1$:
$$
\D_n(\varphi)=
1,1,1,0,-1,-1,-1,0,\ldots
$$

Let us also consider the sequences of ``shifted'' Hankel determinants:
$$
\D_0^{(k)}:=1,
\quad
\D_1^{(k)}=a_{k+1},
\quad
\D_2^{(k)}=\left|
\begin{array}{rl}a_{k+1}&a_{k+2}\\
a_{k+2}&a_{k+3}
\end{array}
\right|,
\quad
\D_3^{(k)}=\left|
\begin{array}{rcl}a_{k+1}&a_{k+2}&a_{k+3}\\
a_{k+2}&a_{k+3}&a_{k+4}\\
a_{k+3}&a_{k+4}&a_{k+5}
\end{array}
\right|,
\ldots
$$
for $k=1,2,3,\ldots$
It turns out that the above described property of the coefficients of~$\left[\varphi\right]_q$ 
holds for three more shifted sequences.
They are again $4$-antiperiodic and consist in $-1,0,1$:
$$
\begin{array}{rclrrrrrrrr}
\D^{(1)}_n(\varphi)&=& 1,&\,0,&-1,&\;1,&-1,&\,0,&\,1,&-1,&\ldots\\[4pt]
\D^{(2)}_n(\varphi)&=& 1,&\,1,&\,1,&\,0,&-1,&-1,&-1,&\,0,&\ldots\\[4pt]
\D^{(3)}_n(\varphi)&=& 1,&-1,&\,0,&\,0,&-1,&\,1,&\,0,&\,0,&\ldots
\end{array}
$$

A similar phenomenon is well known in combinatorics.
It occurs for two celebrated integer sequences called the \textit{Catalan numbers} and the \textit{Motzkin numbers}:
$$
C_n=1, 1, 2, 5, 14, 42,\ldots
\qquad
M_n=1, 1, 2, 4, 9, 21, 51,\ldots
$$ 
Both sequences have hundreds of combinatorial interpretations and applications.
They appear in many seemingly unrelated problems connecting them, sometimes unexpectedly.

Let us briefly recall the definition and basic properties of the Catalan and Motzkin sequences.
The Catalan numbers $C_n= \frac{1}{n+1}\binom{2n}{n}$ count the number of triangulations of the convex $(n+2)$-gon,
they were known to Euler who calculated (but failed to prove!) 
their generating function 
$$
C(q)=\sum_{i=0}^\infty{}C_nq^n=\frac{1-\sqrt{1-4q}}{2q}.
$$
Motzkin numbers have the generating function 
$$
M(q)=\frac{1-q-\sqrt{(1+q)(1-3q)}}{2q^2}=\frac{1}{q}C\Big(\frac{q}{1+q}\Big),
$$ 
and this can be taken as their definition.
Among multiple combinatorial definitions, we mention that $M_n$ is the number of paths in $\Z^2$-grid
from $(0,0)$ to $(0,n)$ with steps $(1,1), (1,0), (1,-1)$.

It has been known for a long time that the Catalan numbers is the only sequence for which
two first Hankel sequences, $\D_n(C)$ and $\D^{(1)}_n(C)$, are identically~$1$.
The Motzkin numbers have the following sequences of Hankel determinants
$$
\D_n\left(M\right)=1,\,1,\,1,\ldots
\qquad\qquad
\D^{(1)}_n\left(M\right)=1,\;1,\,0,-1,-1,\,0,\ldots
$$
The first is again identically $1$, while the second is $3$-antiperiodic.
The Motzkin numbers are characterized by this property; see~\cite{Aig}.

All $q$-deformed metallic numbers~$y_k$ (see Section~\ref{GoldSec}) have similar property.
The greater is~$k$, the more shifted Hankel sequences are (anti)periodic and consist in $-1,0,1$ only.
Conjectured in~\cite{OP}, this was proved in~\cite{HP}, but the proof is very difficult!

But the surprises don't stop there and here is ``the icing on the cake''.

\begin{exe}
Check that
the first three sequences of Hankel determinants
$\D_n(\varphi),\D^{(1)}_n(\varphi)$, and $\D^{(2)}_n(\varphi)$ 
of the $q$-deformed golden ratio satisfy the
recurrence
\begin{equation}
\label{Somos4}
d_{n+4}d_n =d_{n+3}d_{n+1} - d_{n+2}^2.
\end{equation}
\end{exe}

This simple quadratic recurrence is (a version of) the celebrated Somos-4 recurrence.
Invented in the middle of 1980's it became very popular because of the relation to elliptic functions and cluster algebra.
It is also known to generate integrable dynamical system.
Other $q$-deformed metallic numbers~$\left[y_k\right]_q$ also have a similar property.
Is this relation to discrete integrable systems a simple coincidence?
We think not, but this is still a mystery.

\subsection{Trying to connect $q$-numbers to classical sequences}

We finish the lecture by a naive question.
{\it Are the generating functions of the Catalan and Motzkin numbers $C(q)$ and $M(q)$ themselves $q$-numbers?}
Indeed, their properties are close to those of $q$-metallic numbers.
A naive answer is ``no''. 
If they would, they have to be quadratic irrationals, but there is no element $A$ in $\PSL(2,Z)$ 
such that $C(q)$ or $M(q)$ is a fixed point of the $q$-deformed action $A_q$.
However, we have the following.

\begin{exe}
\label{CatMot}
Check that
\begin{equation}
\label{CatMotEq}
T_qS_q\left(C(q)\right)=qC(q)
\qquad\hbox{and}\qquad
S_q\left(M(q)\right)=qM(q)+\frac{q-1}{q}.
\end{equation}
\end{exe}

We find this quite remarkable and deserving further understanding.
Remember that, in order to recover the initial object from its
$q$-analogue, we should substitute $q=1$. 
Heuristically, for the Catalan and Motzkin generating functions we have:
$$
C(1)=\frac{1+i\sqrt{3}}{2}
\qquad\hbox{and}\qquad
M(1)=i.
$$
These are complex numbers, and
the theory of $q$-deformed complex numbers is yet to be developed! 
But for these simple examples
we are lucky, they are fixed points of $TS$ and $S$.
The functions $C(q)$ and $M(q)$ are not fixed points of $T_qS_q$ and $S_q$ in a strict sense,
but they {\it are} in some more general sense that we would be happy to understand!

\section{Solutions and hints}

We solve, or outline solutions of those exercices that do not consist in simple direct computations,
like, for instance, in the case of Exercice~\ref{PascalEx}.

\subsection*{Exercice~\ref{TransEx}}
Part (i) is a simple computation with $2\times2$ matrices.
To prove Part (ii), it suffices to show that every $x\in\Q\cup\{\infty\}$ can be sent to~$0$.
Let $x=\frac{n}{m}$ and $n>m$ (the case $n<m$ is similar), 
By induction on $\max(n,m)$, we assume that there exists $A\in\PSL(2,\Z)$ such that
$A(0)=\frac{n-m}{m}$.
Then $TA(0)=\frac{n}{m}$.

\subsection*{Exercise~\ref{Ex9}}
This statement is analogous to Lemma~\ref{FirstLm}.

\subsection*{Exercise~\ref{LEx}}
Part (i).
The first equality $\left[0\right]_q=0$ is by definition.
Since $1=T(0)$, we have 
$$
\left[1\right]_q=T_q(\left[0\right]_q)=q\left[0\right]_q+1=1.
$$
Similarly, $\frac{1}{0}=S(\frac{0}{1})$ and thus we have
$$
S_q:\left[0\right]_q\mapsto
\left[\frac{1}{0}\right]_q=-\frac{1}{q\cdot0}=\frac{1}{0}.
$$

Part (ii).
Since the continued fraction of $\frac{1}{2}=[0,2]$, we have from~\eqref{qa}
$$
\left[\frac{1}{2}\right]_q=
0+\frac{1}{1+q^{-1}}=\frac{q}{1+q}.
$$
For $\frac{5}{2}=2+\frac{1}{2}$, we have from~\eqref{qa}
$$
\left[\frac{5}{2}\right]_q=1+q+\frac{q^2}{1+q^{-1}}=\frac{1+2q+q^2+q^3}{1+q^{-1}},
$$
and similarly for $\frac{5}{3}$.

\subsection*{Exercise~\ref{Ex11}}
The matrix $B$ can be expressed in terms of the generators as follows
$B=T^3ST^2ST^2STSTS$.
Replace $T$ and $S$ by $T_q$ and $S_q$, as in~\eqref{AcTq}, to obtain~$B_q$.

\subsection*{Exercise~\ref{GoldEx}}
Since $\varphi=[1,1,1,\ldots]$ the periodic continued fraction \eqref{QuantIrrat} reads in this case
$$
\left[\varphi\right]_q
 \quad =\quad
1+ \cfrac{q}{1
          + \cfrac{q^{-1}}{ \left[\varphi\right]_q }} .
$$
It is then easy to deduce the equation 
$$
q\left[\varphi\right]^2_q-
\left(q^2+q-1 \right)\left[\varphi\right]_q -1 =0,
$$
whose formal solution is precisely the first formula of Part~(i) of the exercice.
The other cases are similar to this one. 

\subsection*{Exercise~\ref{CatMot}}
The classical recurrence relation for the Catalan numbers
$$
C_{n}=\sum _{i=1}^{n}C_{i-1}C_{n-i}
$$
(so desired but unproved by Euler!)
reads in terms of the generating function
$C(q)=1+q\,C (q)^2.$
This is equivalent to
$$
\frac{C(q)-1}{C(q)}=q\,C(q),
$$
which is precisely the first equation~\eqref{CatMotEq}.
Similarly, the second equation~\eqref{CatMotEq} is equivalent to the relation
$M(q) = 1 + qM(q) + q^2M(q)^2$.


\end{document}